\documentclass[a4paper,11pt]{amsart}
\usepackage[english]{babel}

\newtheorem{theorem}{Theorem}[section]
\newtheorem{lm}[theorem]{Lemma}

\theoremstyle{definition}

\theoremstyle{remark}
\newtheorem{remark}[theorem]{Remark}

\usepackage{amssymb}

\def\veps{\varepsilon}

\def\SS{\mathbb S}

\def\RR{\mathbb R}
\def\SS{{\mathbb S}}
\def\RN{{\mathbb R^N}}

\def\Om{\Omega}
\def\De{\Delta}
\def\Ga{\Gamma}

\def\al{\alpha}

\def\ga{\gamma}
\def\la{\lambda}

\def\om{\omega}
\def\pa{\partial}

\def\veps{\varepsilon}

\def\dist{\mbox{\rm dist}}

\newcommand{\diam}{\mathop{\mathrm{diam}}}

\def\ds{\displaystyle}
\def\ovr{\overline}

\def\nr{\Vert}
\renewcommand\emptyset{\varnothing}

\newcommand{\usn}{[u]_{\pa G}}

  \begin{document}
    \title[]{Solutions of elliptic equations
with a level surface parallel to the boundary: stability of the radial configuration}

\author[G. Ciraolo]{Giulio Ciraolo}
\address{Dipartimento di Matematica e
Informatica, Universit\`a di Palermo, Via Archirafi 34, 90123, Italy.}
\email{g.ciraolo@math.unipa.it}
\urladdr{http://www.math.unipa.it/~g.ciraolo/}

\author[R. Magnanini]{Rolando Magnanini}
    \address{Dipartimento di Matematica ed Informatica ``U.~Dini'',
Universit\` a di Firenze, viale Morgagni 67/A, 50134 Firenze, Italy.}
    \email{magnanin@math.unifi.it}
    \urladdr{http://web.math.unifi.it/users/magnanin}

\author[S. Sakaguchi]{Shigeru Sakaguchi}
    \address{Research Center for Pure and Applied Mathematics, Graduate School of Information Sciences,
Tohoku University, Sendai 980-8579, Japan.}
    \email{sigersak@m.tohoku.ac.jp}
\urladdr{http://researchmap.jp/sigersak2012415/}
    \keywords{Parallel surfaces,
overdetermined problems, method of moving planes, stability, stationary surfaces, Harnack's inequality}
    \subjclass{Primary 35B06, 35J05, 35J61; Secondary 35B35, 35B09}
\begin{abstract}
Positive solutions of homogeneous Dirichlet boundary value problems or initial-value problems for certain elliptic or parabolic equations must be radially symmetric
and monotone in the radial direction if  {\it just one} of their level surfaces is parallel to the boundary of the domain. Here, for the elliptic case, we prove the stability counterpart of that result. In fact, we show that if the solution is almost constant on a surface at a fixed distance from the boundary, then the domain is almost radially symmetric, in the sense that is contained in and contains two concentric balls $B_{r_e}$ and  $B_{r_i}$, with the difference $r_e-r_i$  (linearly) controlled by a
suitable norm of the deviation of the solution from a constant. The proof relies on and enhances arguments developed by Aftalion, Busca and Reichel in \cite{ABR}.
\end{abstract}


\maketitle

\section[Introduction]{Introduction}
Let $\Om$ be a bounded domain in $\RN$. It has been noticed in \cite{MSaihp}-\cite{MSm2as}, \cite{Sh} and \cite{CMS} that positive solutions of homogeneous Dirichlet boundary value problems or initial-boundary value problems for certain elliptic or, respectively, parabolic equations must be radially symmetric (and the underlying domain $\Om$ be a ball) if  {\it just one} of their level surfaces is parallel to $\pa\Om$ (that is,
if the distance of its points from $\pa\Om$ remains constant).
\par
The property was first identified in \cite{MSaihp}, motivated by the study of {\it invariant isothermic surfaces} of a nonlinear non-degenerate {\it fast diffusion} equation (designed upon the heat equation), and was used
to extend to nonlinear equations the symmetry results obtained in \cite{MS1} for the heat equation.
The proof hinges on the {\it method of moving planes} developed by J. Serrin in \cite{Se}
upon A.D. Aleksandrov's reflection principle (\cite{Al}). Under slightly different assumptions and by a different proof still based on the method of moving planes, a similar result was independently obtained in \cite{Sh} (see also \cite{GGS}). Further extensions can be found in \cite{MSm2as} and \cite{CMS}.
\par
To clarify matters, we consider a simple situation (possibly the simplest). Let $G$ be
a $C^1$-smooth domain in $\RN$, denote by $B_R$ the ball centered at the origin with radius $R$,
\footnote{In the sequel, a ball with radius $R$ centered at a generic point $x$ will be denoted by
$B_R(x)$.}
and define the set
$$
\Om=G+B_R=\{y+z: y\in D, |z|<R\}
$$
--- the {\it Minkowski sum} of  $G$ and $B_R$. Consider the solution $u=u(x)$ of the {\it torsion} boundary value problem
\begin{eqnarray}
\label{torsion}
-\De u=1\ \mbox{ in }\ \Om, \quad u=0 \ \mbox{ on } \ \pa\Om.
\end{eqnarray}
As shown in the aforementioned references, if there is a positive constant $c$ such that
\begin{eqnarray}
\label{parallel}
u=c \ \mbox{ on } \ \pa G,
\end{eqnarray}
then $G$ and $\Om$ must be concentric balls.
\par
The aim of this paper is to investigate on the stability of the radial configuration. In other words,
we suppose that $u$ is {\it almost constant} on $\pa G$ by assuming that the semi-norm
\begin{equation*}
    \usn =  \sup_{{\substack{x,y \in \pa G, \\ x\neq y}}} \frac{|u(x)-u(y)|}{|x-y|}
\end{equation*}
is {\it small} and want to quantitatively show that $\Om$ is close to a ball. A result in this direction that concerns the torsion problem \eqref{torsion} is the following theorem.

\begin{theorem}
\label{th:torsionstability}
Let $G$ be a bounded domain in $\RR^N$ with boundary $\pa G$ of class $C^{2,\alpha}$, $0<\alpha \leq 1$, and set
$\Om=G+B_R$ for some $R>0$. Let $u\in C^2(\Om) \cap C^0({\ovr\Om})$ be the solution of  \eqref{torsion}.
\par
There exist constants $\veps, C>0$ such that, if
$$
\usn \leq \veps,
$$
then there are two concentric balls $B_{r_i}$ and $B_{r_e}$ such that
\begin{eqnarray}
&B_{r_i}\subset\Om\subset B_{r_e}\quad\mbox{ and }\label{stabil1}\\
&r_e-r_i\le C\ \usn .\label{stabil2}
\end{eqnarray}
The constants $\veps$ and $C$ only depend on $N$, the $C^{2,\alpha}$-regularity
of $\pa G$, the diameter of $G$ and, more importantly, the number $R$.
\end{theorem}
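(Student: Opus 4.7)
The plan is to adapt the quantitative method of moving planes of Aftalion--Busca--Reichel \cite{ABR} to the parallel surface setting. For a unit direction $e \in \SS^{N-1}$ and $\lambda \in \RR$, denote the hyperplane $T_\lambda = \{x \in \RN: x\cdot e = \lambda\}$, the reflection $x^\lambda = x + 2(\lambda - x\cdot e)\,e$, the cap $\Om_\lambda = \{x \in \Om: x\cdot e > \lambda\}$, and its reflection $\Om_\lambda^\sharp = \{x^\lambda: x \in \Om_\lambda\}$. Starting from $\lambda$ large and decreasing, one finds a critical value $\lambda^* = \lambda^*(e)$ at which either (a) $\pa\Om_{\lambda^*}^\sharp$ first touches $\pa\Om$ at an interior point $P \notin T_{\lambda^*}$, or (b) $T_{\lambda^*}$ becomes orthogonal to $\pa\Om$ at some $P \in T_{\lambda^*}$. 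Setting $w(x) = u(x^{\lambda^*}) - u(x)$ for $x \in \Om_{\lambda^*}^\sharp \cap \Om$, one checks that $-\De w = 0$ there, $w = 0$ on $T_{\lambda^*}$, and $w = u \geq 0$ on the portion of $\pa\Om_{\lambda^*}^\sharp$ lying in $\ovr\Om$. Thus $w \geq 0$, and in the rigidity case the Hopf lemma (in case (a)) or Serrin's corner lemma (in case (b)) forces $w \equiv 0$, showing that $\Om$ is symmetric across $T_{\lambda^*}$.

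The key observation for stability is that the parallel surface $\pa G$ sits strictly inside $\Om$ at distance $R$ from $\pa\Om$. If $x \in \pa G$ lies in the reflected cap $\Om_{\lambda^*}^\sharp$ and its reflection $x^{\lambda^*}$ also lies in $\pa G$, then
\[
|w(x)| = |u(x^{\lambda^*}) - u(x)| \leq \usn\,|x - x^{\lambda^*}| \leq \usn \cdot \diam(\Om).
\]
Hence $w$ is $O(\usn)$-small on a piece of $\pa G$ strictly inside the cap. Interior Harnack inequalities for the non-negative harmonic function $w$ then propagate this smallness across the cap, giving $w \leq C\,\usn$ on compact interior subsets; the $C^{2,\al}$-regularity of $\pa G$ transfers to $\pa\Om$ through the identity $\Om = G + B_R$, and boundary Schauder estimates extend the bound up to $\pa\Om$ in $C^1$-norm.

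This uniform smallness must be converted into a geometric estimate of the asymmetry of $\Om$ across $T_{\lambda^*}$. At the contact point $P$ we have $w(P) = 0$ and, in the rigidity case, $|\na w(P)|=0$ as well. The strict interior sphere condition $\Om = G + B_R$ supplies a uniform positive lower bound for $|\na u|$ on $\pa\Om$, so a quantitative form of Hopf's lemma applied to $w \geq 0$ at $P$ forces any tangential displacement between $\pa\Om$ and $\pa\Om_{\lambda^*}^\sharp$ near $P$ to be at most $C\,\usn$, lest $w$ exceed its known upper bound. Case (b) is treated analogously via a quantitative version of Serrin's corner lemma, which controls a second-order defect rather than a first-order one. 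The outcome is a linear bound on the asymmetry of $\Om$ across $T_{\lambda^*}$ of order $\usn$, uniformly in $e \in \SS^{N-1}$.

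Finally, letting $e$ range over $\SS^{N-1}$ and invoking a standard geometric argument (in the spirit of \cite{ABR}), the critical hyperplanes $T_{\lambda^*(e)}$ are shown to pass $O(\usn)$-close to a common point $x_0$, and $\Om$ is trapped between two concentric balls centered at $x_0$ with radii differing by $O(\usn)$, proving \eqref{stabil1}--\eqref{stabil2}. The principal obstacle is obtaining the \emph{linear} rate announced in the theorem: for Serrin's gradient-overdetermined problem, \cite{ABR} only yields a logarithmic estimate, and the improvement here must come from the fact that the Lipschitz datum is prescribed on the interior surface $\pa G$, where $u$ is bounded away from $0$ and sharp Harnack control is available, rather than on $\pa\Om$ itself, where only degenerate gradient information is at hand.
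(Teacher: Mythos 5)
There is a genuine gap, and it sits exactly where you flag the ``principal obstacle'' at the end: your argument, as sketched, is the Aftalion--Busca--Reichel scheme for Serrin's problem and would only deliver the logarithmic rate, not the linear one. The problem is that you run the moving planes on $\Om$: your critical position $\lambda^*(e)$ is determined by tangency or orthogonality of the reflected cap with $\pa\Om$, so the contact point $P$ lies on $\pa\Om$, where $u=0$ and $w(P)=0$ carries no information about $\usn$ whatsoever. You are then forced to extract quantitative information from Hopf's lemma and Serrin's corner lemma \emph{at} $\pa\Om$, and to push Harnack estimates up to $\pa\Om$ via boundary Schauder theory; it is precisely the degeneration of those constants near $\pa\Om$ that produces the $|\log\|\cdot\||^{-1/N}$ rate in \cite{ABR}. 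Your second paragraph (smallness of $w$ on points of $\pa G$ whose reflection also lies on $\pa G$) contains the right germ, but with the critical plane chosen relative to $\Om$ there is no guarantee that any such point exists, let alone that it controls the asymmetry.

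The correct setup is to define the critical position with respect to $G$, not $\Om$: take $m=\inf\{\mu: G^\la\subset G \mbox{ for all } \la\in(\mu,M)\}$ and use that $G^\la\subset G$ implies $\Om^\la\subset\Om$ (so $w\ge 0$ on $\pa\Om_m$ still holds). Then the critical contact is between $G^m$ and $\pa G$: in case (i) one gets $P\in\pa G$ with $P^m\in\pa G$, so $w(P^m)=u(P)-u(P^m)$ is directly bounded via $\usn$ (with the two sub-cases $|P-P^m|\ge\rho$ and $|P-P^m|<\rho$ handled separately to also control $w(P^m)/\dist(P^m,\pi_m)$); in case (ii) the direction $e$ is tangent to $\pa G$ at $Q$, so $\pa w/\pa x_1(Q)=-2\,\pa u/\pa e(Q)$ is a tangential derivative of $u$ on $\pa G$, again bounded by $\usn$. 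Since $\ovr{G}$ sits at distance $R$ inside $\Om$, a chain of \emph{interior} Harnack inequalities (plus a boundary Harnack inequality only at the flat hyperplane $\pi_m$, where $w=0$) propagates this smallness to all of $G_m$ with a constant depending on $R$ but never degenerating --- no quantitative Hopf or corner lemma at $\pa\Om$ is needed. Finally, to convert $w^m\le C\usn$ into geometry you need a lower bound of the form $u(x)\ge K\,\dist(x,\pa G)+\min_{\pa G}u$ (obtained by comparison with the torsion function of $G$), which shows that every point of the symmetrized set's boundary is within $C_*\usn$ of $\pa G$; the assembly over $N$ coordinate directions then proceeds as you describe.
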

\par
A result similar to Theorem \ref{th:torsionstability}
was proved by Aftalion, Busca and Reichel in \cite{ABR}. There, under further suitable assumptions, it is proved the stability estimate
\begin{equation}
\label{ABR}
r_e-r_i\le C\,|\log  \left\|u_\nu-d\|_{C^1(\pa \Om)} \right|^{-1/N},\footnote{For the definition
of the norm, see Section 4.}
\end{equation}
where $u_\nu$ denotes the (exterior) normal derivative of the solution of \eqref{torsion},
$d$ is a constant and $\| \cdot \|_{C^1(\pa \Om)}$ is the usual $C^1$ norm on $\pa G$;
\eqref{ABR} is the quantitative version of Serrin's symmetry result \cite{Se} that stated that the
solutions of \eqref{torsion} whose normal derivative is constant on $\pa\Om$, that is
\begin{equation}
\label{neumann}
u_\nu=d \ \mbox{ on } \ \pa\Om,
\end{equation}
must be radially symmetric (and $\Om$ must be a ball). In \cite{ABR},
\eqref{ABR} is a particular case of a similar estimate that holds for general semilinear equations, that is when solutions of the problem
\begin{equation}
\label{semilinear}
-\De u=f(u) \ \mbox{ and } \ u\ge 0 \ \mbox{ in } \ \Om, \qquad u=0 \ \mbox{ on }  \ \pa\Om,
\end{equation}
are considered (here, $f$ is a locally Lipschitz continuous function).
\par
The logarithmic dependence in \eqref{ABR} is due to the method of proof employed, which is based on the idea of
refining the method of moving planes from a quantitative point of view. As that method is
based on the maximum (or comparison) principle, its quantitative counterpart is based on {\it Harnack's inequality} and some quantitative versions of {\it Hopf's boundary lemma} and {\it Serrin's corner lemma} (that involves the second derivatives of the solution on $\pa\Om$). The exponential dependence of the constant involved in Harnack's inequality leads to the
logarithmic dependence in \eqref{ABR}.
\par
Estimate \eqref{ABR} was largely improved in \cite{BNST2} (see also \cite{BNST}) but only for the case of the torsion problem \eqref{torsion} (a version of this result is also available for Monge-Amp\`ere equations \cite{BNST3}). There, \eqref{ABR} is enhanced in two respects: the logarithmic dependence at the right-hand side of \eqref{ABR} is replaced by a power law of H\"older type; an estimate is also given in which the $C^1$-norm is merely replaced by the $L^1$-norm.
The results in \cite{BNST} are obtained by remodeling in a quantitative manner the Weinberger's proof \cite{We} of Serrin's result, which is grounded on integral formulas such as Rellich-Pohozaev's identity. We observe that, unfortunately,
that approach does not seem to work in our setting, since the overdetermining condition \eqref{parallel} does not conveniently match the underlying integral formulas of {\it variational} type.

\par

The proof of Theorem \ref{th:torsionstability} adapts the approach used in \cite{ABR} but, differently
from that paper, it results in the {\it linear} estimate \eqref{stabil1}-\eqref{stabil2}.
The reason for this gain must be ascribed to the fact that, in our setting, we deal with the values of $u$
on $\pa G$, which is in the {\it interior} of $\Om$, and we are dispensed with the use
of estimates {\it up to the boundary of} $\pa\Om$, which are accountable for the logarithmic behavior in \eqref{ABR}.
However, the benefit obtained in Theorem \ref{th:torsionstability} has a cost: the constant $C$ in
\eqref{stabil2} blows up exponentially to infinity as $R$ tends to zero.
\par
The proof can be outlined as follows. For any fixed direction $\om$, the method of moving planes determines a hyperplane $\pi$, orthogonal to $\om$  and in {\it critical \footnote{Se Section 2 for the meaning of this word.} position},
 and a domain $X$ contained in $G$ and symmetric about $\pi$.  Since $\ovr{X} \subseteq \ovr{G} \subset \Omega$, we can use interior and boundary Harnack's inequalities to estimate in terms of $\usn$ the values of the harmonic function $w(x)=u(x')-u(x)$ (here, $x'$ is the {\it mirror image}  in $\pi$ of the point $x$) in the half of $X$ where $w$ is non-negative.
Such an estimate gives a bound on the distance of $\pa X$ from $\pa G$; it is important to observe that such a bound does not depend on the direction $\om$. By repeating this argument for $N$ orthogonal directions, an approximate center of symmetry --- where the two balls $B_{r_i}$ and $B_{r_e}$ in \eqref{stabil1} are centered --- can be determined. The radii of these balls then satisfy \eqref{stabil2}.
\par
We conclude this introduction with some remarks on the relationship between problem
\eqref{torsion}-\eqref{parallel} and Serrin's problem \eqref{torsion}-\eqref{neumann}. In both cases,
a solution exists (if and) only if the domain is a ball; a natural question arises: does the symmetry obtained for one of these problems imply that for the other one?
\par
We begin by observing that conditions \eqref{torsion}-\eqref{parallel} neither imply nor are implied by conditions \eqref{torsion}-\eqref{neumann}, directly. However, we can claim that \eqref{neumann} seems to be a stronger constraint, in the sense that, in order to obtain it, one has to require that
\eqref{parallel} holds (with different constants) for at least a sequence of parallel surfaces clustering around $\pa\Om$. On the other hand, if \eqref{torsion}-\eqref{neumann} holds, we cannot claim that its solution $u$ is constant on surfaces parallel to $\pa\Om$, but we can surely say that the oscillation
of $u$ on a parallel surface becomes smaller, the closer the surface is to $\pa\Om$; in fact, an easy Taylor-expansion argument tells us that, if $\Ga_t=\{x\in\Om: \dist(x,\pa\Om)=t\}$, then
\begin{equation}
\label{asymptotic}
\max_{\Ga_t}u-\min_{\Ga_t}u=O(t^2) \ \mbox{ as } \ t\to 0^+.
\end{equation}
Theorem \ref{th:torsionstability} suggests the possibility that Serrin's symmetry result may be
obtained, so to speak {\it by stability}, via \eqref{stabil2} (conveniently remodeled) and \eqref{asymptotic} in the limit as $t\to 0^+$.
If successful, this plan would show that the method of moving planes can be employed to prove Serrin's symmetry result with no need of the aforementioned corner's lemma and under weaker assumptions on $\pa \Omega$ and $u$ (see also \cite{Sh}). To our knowledge, this issue has been addressed only in \cite{Pr}, for domains with one possible corner or cusp, and in \cite{GL}, by arguments based on \cite{We}. At this moment, the blowing up dependence of the constant $C$ in \eqref{stabil2} on the distance of the relevant parallel surface from $\pa\Om$ prevents us to make this possibility come true.
\par
The paper is organized as follows. In Section \ref{section symmetry thm} we introduce some notation and, for the reader's convenience, we recall the proof of the relevant symmetry result. In Section \ref{section stability estimates}, we carry out the proof of Theorem \ref{th:torsionstability}.
In Section \ref{section semilinear}, we explain how our result can be extended to the classical case of semilinear equations \eqref{semilinear}.

\setcounter{equation}{0}

\section{Parallel surfaces and symmetry}  \label{section symmetry thm}

To assist the reader to understand the proof of Theorem \ref{th:torsionstability}, here we summarize the arguments developed in \cite{MSaihp}, \cite{MSm2as} and \cite{CMS} to prove the symmetry of a domain
$\Om$ admitting a solution $u$ of \eqref{torsion} and \eqref{parallel}. This will be the occasion
to set up some necessary notations.

In this section we assume that the boundary of $G$ is of class $C^1$; this assumption guarantees that the method of moving planes can be initialized (see \cite{Fr}).
\par
Let $\om\in\RN$ be a unit vector and $\la\in\RR$ a parameter. We define the following objects:
\begin{equation}
\label{definitions}
\begin{array}{lll}
&\pi_{\la}=\{ x\in\RN: x\cdot\om=\la\}\ &\mbox{a hyperplane orthogonal to $\om,$}\\
&A_{\la}=\{x\in A: x\cdot\om>\la\}\ &\mbox{the right-hand cap of a set $A$},\\
&x^{\la}=x-2(x\cdot\om-\la)\,\om\ &\mbox{the reflection of $x$ about $\pi_{\la},$}\\
&A^{\la}=\{x\in\RN:x^{\la}\in A_{\la}\}\ &\mbox{the reflected cap about $\pi_{\la}$.}
\end{array}
\end{equation}
Set $M=\sup\{x\cdot\om: x\in G\}$, the extent of $G$ in direction $\om$; if $\la<M$ is close to $M$, the reflected cap $G^\la$ is contained in $G$ (see \cite{Fr}). Set 
\begin{equation}\label{m def}
m=\inf\{\mu: G^{\la}\subset G \mbox{ for all } \la\in(\mu,M)\}.
\end{equation}
Then for $\lambda = m$ at least one of the following two cases occurs:
 \begin{enumerate}
\item[(i)]
$G^{\la}$ becomes internally tangent to $\pa G$ at some
point $P\in\pa G\setminus\pi_{\la};$
\item[(ii)]
$\pi_{\la}$ is orthogonal to $\pa G$ at some point $Q\in\pa G\cap\pi_{\la}.$
\end{enumerate}

We notice that if
$$
\Om=G+B_R = \{x+y:\ x \in G,\ y \in B_R\},
$$
then we have that
$$
G^{\la} \subset G  \ \Rightarrow \ \Om^{\la} \subset \Om,
$$
which follows from an easy adaptation of \cite[Lemmas 2.1 and 2.2]{MSm2as}.

\begin{theorem} \label{thm symmetry}
Let $G$ be a bounded domain in $\RR^N$ with boundary $\pa G$ of class $C^1$ and set
$\Om=G+B_R$ for some $R>0$. Suppose there exists a function $u\in C^0(\ovr{\Om})\cap C^2({\Om})$ satisfying \eqref{torsion} and \eqref{parallel}.
\par
Then $G$ (and hence $\Omega$) is a ball and $u$ is radially symmetric.\footnote{Note that, if $G$ is a spherical annulus in $\RR^N$, then the solution of \eqref{torsion} is radially symmetric but does not satisfy \eqref{parallel}.}
\end{theorem}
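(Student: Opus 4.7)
The plan is to apply the method of moving planes in an arbitrary unit direction $\omega$ to the domain $G$ and exploit the overdetermined condition $u=c$ on $\partial G$ to force symmetry of $\Om$ (and hence $G$) across the critical hyperplane $\pi_m$. Since $\omega$ is arbitrary, this will yield that $\Om$ is a ball.

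First I fix $\omega$ and run the moving planes procedure on $G$, producing the critical value $m$ from \eqref{m def} together with one of alternatives (i), (ii). The Minkowski implication recorded in the excerpt gives $G^m\subset G \Rightarrow \Om^m\subset \Om$, so the antisymmetric function $w(x)=u(x^m)-u(x)$ is well-defined on the right cap $\Om_m$. Because reflection about $\pi_m$ commutes with the Laplacian, $w$ is harmonic in $\Om_m$; on $\pi_m\cap\Om$ it vanishes identically, while on the remaining portion of $\partial \Om_m$ one has $x\in\partial\Om$ (so $u(x)=0$) and $x^m\in\overline{\Om}$ (so $u(x^m)\ge 0$). Thus $w\ge 0$ on $\partial\Om_m$, and the weak maximum principle upgrades this to $w\ge 0$ throughout $\Om_m$.

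To close alternative (i), I use the internal tangency: there exists $P\in\partial G\setminus \pi_m$ with $P^m$ also lying on $\partial G$, and the parallel-surface hypothesis gives $u(P)=u(P^m)=c$, so $w(P^m)=0$. Since $\partial G$ sits at distance $R>0$ from $\partial \Om$, the point $P^m$ lies \emph{strictly in the interior} of $\Om_m$, so the strong maximum principle forces $w\equiv 0$ on its connected component. Propagating this identity, $u$ is symmetric across $\pi_m$, hence so is $\Om$ (and $G$).

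I expect alternative (ii) to be the main obstacle. Here $Q\in \partial G\cap\pi_m$ with $\pi_m$ orthogonal to $\partial G$ at $Q$, so $\omega$ is tangent to $\partial G$ and the condition $u=c$ along $\partial G$ gives only $w(Q)=0$ on $\pi_m$, where $w$ already vanishes trivially. The remedy is a Serrin-type corner argument exploiting second-order information on $w$ along $\pi_m$, applied either at $Q$ itself or at the parallel boundary point $Q^{*}=Q+R\nu\in \partial\Om\cap\pi_m$ (with $\nu$ the outer normal to $\partial G$ at $Q$), and taking advantage of the fact that the Minkowski structure $\Om=G+B_R$ transfers regularity from $\partial G$ to $\partial\Om$. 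Once (ii) is ruled out in this manner, the conclusion follows: $\Om$ is symmetric across $\pi_m$ for every direction $\omega$, and a standard application of this to $N$ mutually orthogonal $\omega$'s together with a translation argument shows that $\Om$ is a ball, $G$ is the concentric ball of radius (extent of $G$ in $\omega$) and $u$ is radial.
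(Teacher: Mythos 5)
Your case (i) is sound and is essentially the paper's argument: since $P$ and $P^m$ both lie on $\partial G$, which sits at distance $R$ from $\partial\Omega$, the function $w$ vanishes at an interior point of $\Omega_m$, and the strong maximum principle finishes that case. The genuine gap is case (ii), which you acknowledge you have not closed and for which the route you propose would not work. A Serrin-type corner argument (whether at $Q$ or at the translated point $Q^{*}=Q+R\nu\in\partial\Omega\cap\pi_m$) derives a sign condition on first and second derivatives of $w$ at a corner of the cap; in Serrin's problem the contradiction is then supplied by the overdetermined Neumann condition $u_\nu=d$ on $\partial\Omega$, which forces those derivatives of $w$ to vanish there. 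In the present problem nothing is prescribed on $\partial\Omega$ beyond $u=0$ --- the overdetermination lives on the interior surface $\partial G$ --- so there is no data to play against the corner lemma, and the argument does not close.

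The point you are missing, and indeed the raison d'\^etre of the parallel-surface setting, is that case (ii) needs no corner analysis at all. The point $Q\in\partial G\cap\pi_m$ is an \emph{interior} point of $\Omega$ lying on the flat portion $\partial\Omega_m\cap\pi_m$ of the cap's boundary, where $w\equiv 0$ and where $\Omega_m$ trivially admits an interior tangent ball. Hence, unless $w\equiv 0$ in (the relevant component of) $\Omega_m$, the strong maximum principle gives $w>0$ there and the \emph{classical} Hopf boundary point lemma at $Q$ yields $\partial w/\partial\omega(Q)>0$, i.e.\ $\partial u/\partial\omega(Q)<0$, because $\partial w/\partial\omega=-2\,\partial u/\partial\omega$ on $\Omega\cap\pi_m$. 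On the other hand, in case (ii) the direction $\omega$ is tangent to $\partial G$ at $Q$, and differentiating the identity $u=c$ along $\partial G$ gives $\partial u/\partial\omega(Q)=0$, a contradiction. This is exactly how the paper disposes of (ii); it uses only first-order information and is the reason the theorem holds with $\partial G$ merely of class $C^1$ and with no second derivatives of $u$ on any boundary. You should replace your sketched corner argument with this Hopf-lemma step; the remainder of your outline (arbitrariness of $\omega$, symmetry in $N$ independent directions, hence a ball) is then standard.
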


\begin{proof}
By contradiction, let us assume that $G$ is not a ball. Let $\om\in\SS^{N-1}$ and, on the right-hand cap $\Om_{m}$ at critical position $\la=m,$
consider the function defined by
\begin{equation} \label{wm}
w^{m}(x)=u(x^{m})-u(x), \ x\in\Om_{m}.
\end{equation}
It is clear that $w^{m}$ satisfies the conditions
\begin{equation}
\label{wmsys}
\De w^{m}=0 \ \mbox{ and } \ w^{m}\ge 0 \ \mbox{ on } \ \pa\Om_{m}.
\end{equation}
Moreover, since $G$ is not a ball, there is at least one direction $\om$
such that $w^m$ is not identically zero on $\pa\Om_m$; hence, by the strong maximum principle, we have that
\begin{equation}
\label{w > 0}
w^{m}>0 \ \mbox{ in } \ \Om_{m}.
\end{equation}
\par
Also, we can apply Hopf's boundary point lemma to the points in $\pa\Om^m\cap\pi_m$ and obtain that
\begin{equation*}
\frac{\pa w^{m}}{\pa \om} >0\ \textmd{ on } \ \pa\Om_m\cap\pi_m,
\end{equation*}
and hence
\begin{equation}
\label{hopf}
\frac{\pa u}{\pa\om}<0 \ \mbox{ on }  \ \pa\Om_m\cap\pi_m,
\end{equation}
since
$$
\ds\frac{\pa w^{m}}{\pa\om}=-2\frac{\pa u}{\pa\om} \ \textmd{ on } \ \Om\cap\pi_m.
$$
\par
If (i) holds, we get a contradiction to \eqref{w > 0} by observing that
$$
w^{m}(P)=u(P^{m})-u(P)=0,
$$
since \eqref{parallel} holds and both $P$ and $P^{m}$ belong to $\pa G$.
\par
If (ii) holds, we observe that $\om$ is tangent
to $\pa G$ at $Q$ and that \eqref{parallel} implies that
$$
\frac{\pa u}{\pa\om}(Q)=0,
$$
contradicting \eqref{hopf}.
\par
Thus, we conclude that $G$ must be a ball and hence $\Om =G + B_R$ is also a ball.
\end{proof}

\setcounter{equation}{0}

\section{Stability estimates for the torsion problem} \label{section stability estimates}
In this section, we present a proof of Theorem \ref{th:torsionstability} that
relies on the ideas in \cite{ABR} and the {\it Harnack-type} and {\it tangential} stability estimates
contained in Lemmas \ref{proposition Harnack} and \ref{proposition Harnack 2}, respectively.
\par
These Lemmas analyse the two critical situations (i) and (ii) of Section 2 from a
quantitative point of view. Thus, we shall consider two domains in $\RN$,
$D_1$ and $D_2$, containing the origin and such that $D_1\subset D_2$. We shall also
suppose that there is an $R>0$ such that $B_R(x)\subset D_2$ for any $x\in\ovr{D}_1$
and use the following notations:
\begin{equation} \label{D e Er}
\begin{array}{ll}
D^+_j=\{ x\in D_j: x_1>0\}, \ j=1, 2, \\
E_R=\{ x\in D_1+B_{R/2}: x_1>R/2\}.
\end{array}
\end{equation}
Without loss of generality, we can assume that $D^+_1$ is connected; this easily implies
that also $E_R$ is connected.

\begin{lm}\label{proposition Harnack}
Assume that a function $w\in C^0(D_2)\cap C^2(D^+_2)$ satisfies the conditions
\begin{equation*} \label{w eq}
\De w=0 \ \mbox{ and } \ w\ge 0 \ \mbox{ in } \ D^+_2, \quad w=0 \ \textmd{ on } \ \pa D_2^+ \cap \{x_1=0\} ,
\end{equation*}
and let $z=(z_1,\ldots,z_N)$ be any point  in $\ovr{D_1^+}$.
\par
Then, for every $x \in \ovr{E_R}$ it holds that
\begin{eqnarray}
&& w(x) \leq C\,\frac{w(z)}{z_1}, \quad  \textmd{if } z_1>0,  \label{estim z1 >0} \\
&& w(x) \leq C\,\frac{\partial w}{\pa x_1} (z),\quad  \textmd{if } z_1=0,  \label{estim z1 =0}
\end{eqnarray}
where $C$ is a constant depending on $N$, the diameters of $D_1$ and $D_2$, and $R$
(see \eqref{C(diam D_1,R,N)} below).
\end{lm}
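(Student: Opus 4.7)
The plan is to chain two ingredients: an interior Harnack-chain bound comparing $w$ at any $x\in \overline{E_R}$ to its value at a fixed reference point $y_0\in E_R$, and a boundary estimate relating $w(y_0)$ to $w(z)/z_1$ (when $z_1>0$) or to $\tfrac{\partial w}{\partial x_1}(z)$ (when $z_1=0$).

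For the interior step, observe that every $x\in \overline{E_R}$ satisfies $B_{R/2}(x)\subset D_2^+$: writing $x=p+q$ with $p\in \overline{D_1}$ and $|q|\le R/2$ yields $B_{R/2}(x)\subset B_R(p)\subset D_2$, while $x_1>R/2$ forces $B_{R/2}(x)\subset\{x_1>0\}$. Because $\overline{E_R}$ is connected with diameter at most $\diam(D_1)+R$, a chain of such half-radius balls of length depending only on $N$, $\diam(D_1)$ and $R$, together with the classical interior Harnack inequality, gives
\[
w(x)\le C_1\, w(y_0)\qquad \text{for every }x\in \overline{E_R},
\]
with $y_0\in E_R$ chosen once and for all.

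For the boundary step, fix $z\in \overline{D_1^+}$ and distinguish two sub-cases. If $z_1\ge R/4$, then $z$ is at distance $\ge R/4$ from $\{x_1=0\}$ and an interior Harnack chain from $z$ to $y_0$ (routed inside $D_2^+$ via the buffer $\overline{D_1}+B_R\subset D_2$) gives $w(y_0)\le C_2\, w(z)\le C_2\,\diam(D_1)\, w(z)/z_1$. Otherwise $0\le z_1<R/4$: set $z^0=(0,z_2,\ldots,z_N)$ and $z^*=z^0+(R/4)\, e_1$. Since $|z^0-z|=z_1<R/4$ and $B_R(z)\subset D_2$, we have $B_{R/2}(z^0)\subset D_2$, and $w$ vanishes on the flat face $B_{R/2}(z^0)\cap\{x_1=0\}$. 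Odd Schwarz reflection across this face extends $w$ harmonically to the full ball $B_{R/2}(z^0)$; applying interior estimates for the extension (equivalently, the classical half-ball boundary Harnack principle) yields
\[
c\,\frac{x_1}{R}\, w(z^*)\ \le\ w(x)\ \le\ C\,\frac{x_1}{R}\, w(z^*)\qquad \text{for all }x\in B_{R/4}^+(z^0),
\]
with dimensional constants $c,C$. The lower bound applied to $x=z$ yields $w(z^*)\le R\, w(z)/(c\, z_1)$ when $z_1>0$; letting $x_1\to 0^+$ along $x=z+x_1 e_1$ in the same lower bound gives $w(z^*)\le (R/c)\,\tfrac{\partial w}{\partial x_1}(z)$ when $z_1=0$. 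A further Harnack chain from $z^*$ to $y_0$ inside $D_2^+$ produces $w(y_0)\le C_3\, w(z^*)$, and combining with the interior step yields both \eqref{estim z1 >0} and \eqref{estim z1 =0}.

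The main obstacle is the quantitative half-ball Harnack comparison in the second sub-case: one needs a two-sided estimate with constants depending only on $N$, uniformly in $z$. The flat geometry of $\{x_1=0\}$ together with the harmonicity of $w$ makes odd Schwarz reflection available, bypassing the general (and more delicate) boundary Harnack principle on Lipschitz domains; standard interior derivative estimates for the reflected harmonic function then deliver the desired comparison. Once this is in place, the inclusion $\overline{D_1}+B_R\subset D_2$ guarantees that every Harnack chain fits inside $D_2^+$ with a number of steps depending only on $N$, $\diam(D_1)$ and $R$, leading to the constant $C$ asserted in the statement.
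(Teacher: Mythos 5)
Your proof is correct, and its skeleton (Harnack chains inside $D_2^+$ at scale comparable to $R$, plus a local step near $z$ that produces the factor $z_1$ in the denominator) matches the paper's. The genuine difference is in that local step. When $0\le z_1<R/2$ the paper does not reflect: it places the ball $B_{R/2}(\hat z)$ with $\hat z=(R/2,z_2,\dots,z_N)$, notes $z\in B_{R/2}(\hat z)\subset D_2^+$, and applies the sharp (Poisson-kernel) form of Harnack's inequality, whose lower bound $r^{N-2}\,\frac{r-|z-\hat z|}{(r+|z-\hat z|)^{N-1}}\,w(\hat z)\le w(z)$ already contains the factor $r-|z-\hat z|=z_1$; the derivative estimate for $z_1=0$ then follows by letting $z_1\to0$ and using $w=0$ on $\{x_1=0\}$. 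You instead project $z$ to $z^0$ on the hyperplane, perform an odd Schwarz reflection of $w$ across the flat face of $B_{R/2}(z^0)$, and invoke the two-sided linear comparison $w(x)\asymp (x_1/R)\,w(z^*)$ on the half-ball. Both routes are sound and give constants of the same exponential type $C_N^{(\diam D_2/R)^N}$. The paper's version is more elementary — it needs only positivity of $w$ (the boundary condition enters solely when passing to the limit for \eqref{estim z1 =0}) and avoids any boundary Harnack machinery — whereas your reflection argument requires the vanishing of $w$ on the flat face already in the case $z_1>0$ and relies on the (standard, but heavier) half-ball comparison; on the other hand it makes the differentiability of $w$ at interior points of $\{x_1=0\}$, implicitly used in \eqref{estim z1 =0}, completely transparent. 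One point to tighten: the existence of a Harnack chain from $z^*$ (or from $z$ when $z_1\ge R/4$) to your reference point $y_0$ at a fixed positive distance from $\pa D_2^+$ rests on the connectedness of sets of the form $(D_1+B_{R/2})\cap\{x_1>s\}$; the paper makes this explicit and delegates the chain construction to \cite[Lemma A.1]{ABR}, and you should do the same rather than leave it at ``routed via the buffer''.
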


\begin{proof}
We can always assume that $w>0$ in $D_2^+$.
We shall distinguish the three cases: (i) $z_1 \geq R/2$; (ii) $0<z_1<R/2$; (iii) $z_1=0$.

(i) Let $B_{r}(x_0)$ be any ball contained in $D_2^+$. It is well-known that Harnack's inequality (see \cite{GT}) gives:
\begin{equation}\label{Harnack Harmonic}
r^{N-2} \,\frac{r-|x-x_0|}{(r+|x-x_0|)^{N-1}}\, w(x_0) \leq w(x) \leq r^{N-2}\,\frac{r+|x-x_0|}{(r-|x-x_0|)^{N-1}}\, w(x_0),
\end{equation}
for any $x\in B_r(x_0)$. By the monotonicity of the two ratios in $|x-x_0|$, we have that
\begin{equation}\label{Harnack Harmonic 2}
2^{N-2}3^{1-N} w(x_0) \leq w(x) \leq 3\cdot 2^{N-2} w(x_0) \quad \textmd{for any } x \in \overline{B_{r/2}(x_0)}.
\end{equation}
\par
Let $y\in \ovr{E_R}$ be such that
$$
w(y)=\sup\limits_{E_R} w.
$$
Since $\dist(y,\pa D_2^+) \geq R/2$ and $(D_1+B_{R/2}) \cap \{x_1>s\}$ is connected for any $0 \leq s\leq R/2$, by \cite[Lemma A.1]{ABR}, there exists a chain of $n$ pairwise disjoint balls $\{ B_{R/4}(p_i) \}_{i=1}^n$ such that $\dist(p_i,\pa D_2^+) \geq R/2,\: i=1,\ldots,n$,
$$
 y,z \in \bigcup_{i=1}^n \overline{B_{R/4}(p_i)},
$$
and
$$
\overline{B_{R/4}(p_i)} \cap \overline{B_{R/4}(p_{i+1})}\neq \emptyset,\quad  i=1,\ldots,n-1. $$
Since $B_{R/2}(p_i) \subset D_2^+$, by applying \eqref{Harnack Harmonic 2} to each ball and combining the resulting inequalities yields that
\begin{equation} \label{Harnack case i 1}
\sup_{E_R} w \leq (3\cdot 2^{N-2})^{n+1} w(z).
\end{equation}
Since $z_1 \leq \diam D_1^+\le\diam D_1$, we easily obtain that
\begin{equation} \label{Harnack case i 2}
\sup_{{E}_R} w \leq (\diam D_1)(3\cdot 2^{N-2})^{n+1} \frac{w(z)}{z_1}.
\end{equation}
An upper bound for the optimal number of balls $n$ is clearly
$(2\,\diam D_2)^N R^{-N};$
the last inequality then implies \eqref{estim z1 >0}.

(ii) Since $0<z_1 < R/2$, the point $\hat{z}=(R/2,z_2,\ldots,z_N)$ is such that
\begin{equation*}
z \in B_{R/2}(\hat{z}) \subset D_2^+ .
\end{equation*}
We apply \eqref{Harnack Harmonic} with $x_0=\hat{z},\ x=z$ and $r=R/2$ and obtain that
\begin{equation*}
\Big(\frac{R}{2} \Big)^{N-2} \frac{z_1}{(R-z_1)^{N-1}}\,w(\hat{z}) \leq w(z),
\end{equation*}
and hence
\begin{equation*}
w(\hat{z}) \leq 2^{N-2}R\,\frac{w(z)}{z_1}.
\end{equation*}
By noticing that $\hat{z} \in \ovr{E_R}$, we can use the same Harnack-type argument used for case (i) (see formula \eqref{Harnack case i 1}) to obtain
$$
\sup_{E_R} w \leq (3\cdot 2^{N-2})^{[1+(2\,\diam D_2)^N/R^{N}]} w(\hat{z}),
$$
and hence
\begin{equation}\label{Harnack case ii}
 \sup_{{E}_R} w \leq 2^{N-2}R (3\cdot2^{N-2})^{[1+(2\diam D_2)^N/R^N]} \frac{w(z)}{z_1}.
\end{equation}
\par
(iii) By taking the limit as $z_1 \to 0$ in \eqref{Harnack case ii} and noticing that $w= 0$ for $x_1 = 0$, we obtain \eqref{estim z1 =0} as
\begin{equation}\label{Harnack case iii}
 \sup_{{E}_R} w \leq 2^{N-2}R (3\cdot2^{N-2})^{[1+(2\diam D_2)^N/R^N]} \frac{\pa w}{\pa x_1}(z).
\end{equation}
An inspection of \eqref{Harnack case i 2}, \eqref{Harnack case ii} and \eqref{Harnack case iii}
informs us that the constant $C$ in \eqref{estim z1 >0} and \eqref{estim z1 =0} is given by
\begin{equation}\label{C(diam D_1,R,N)}
C=3\max(2^{N-2} R, \diam D_1) 2^{N-2} C_N^{(\diam D_2 / R)^N} \ \mbox{ with } \ C_N=3^{2^N}2^{(N-2)2^N}.
\end{equation}
\end{proof}

Now, we extend estimates \eqref{estim z1 >0} and \eqref{estim z1 =0} to the case in which $x$ is any point in $\overline{D_1^+} \setminus \{x_1 = 0\}$.

\begin{lm}\label{proposition Harnack 2}
Let $D_1,\: D_2, \: R, w$ and $z$ be as in Lemma \ref{proposition Harnack}.
\par
Then, for any $x \in \overline{D_1^+} \setminus \{x_1 = 0 \}$, it holds that
\begin{equation*}
\begin{array}{ll}
\ds w(x) \leq M\,C\,\frac{w(z)}{z_1}, \quad  &\textmd{if } z_1>0,  \\ 
\ds w(x) \leq M\,C\,\frac{\partial w}{\pa x_1} (z),\quad &\textmd{if } z_1=0,  \label{estim z1 =0 II}
\end{array}
\end{equation*}
where $C$ is given by \eqref{C(diam D_1,R,N)} and $M$ is a constant only depending on $N$.
\end{lm}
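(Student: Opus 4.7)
The plan is to reduce the statement to Lemma~\ref{proposition Harnack} by showing that, for every $x \in \ovr{D_1^+}\setminus\{x_1=0\}$, there is a companion point $\hat x \in \ovr{E_R}$ with $w(x) \leq M\,w(\hat x)$ for some constant $M$ depending only on $N$. Combined with Lemma~\ref{proposition Harnack} applied at $\hat x$, this will yield the desired estimate.

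If $x_1 \geq R/2$, I would take $\hat x = x$: since $x \in \ovr{D_1}\subset\ovr{D_1+B_{R/2}}$ and $x_1\geq R/2$, the point $x$ automatically lies in $\ovr{E_R}$ and there is nothing more to prove ($M=1$ suffices). If $0 < x_1 < R/2$, I would set $\hat x = (R/2,x_2,\ldots,x_N)$; this $\hat x$ lies in $\ovr{E_R}$ because $\hat x_1 = R/2$ and $|\hat x - x| = R/2 - x_1 < R/2$ place $\hat x$ in $\ovr{D_1+B_{R/2}}$.

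For the range $R/4 \leq x_1 < R/2$, the ball $B_{R/2}(\hat x)$ is contained in $D_2^+$ (indeed $B_{R/2}(\hat x) \subset B_R(x) \subset D_2$ because $|\hat x - x| < R/2$, and $B_{R/2}(\hat x) \subset \{y_1 > 0\}$ because $\hat x_1 = R/2$), and $|x - \hat x| \leq R/4$; thus Harnack's inequality in the form \eqref{Harnack Harmonic 2} yields at once $w(x)\leq 3\cdot 2^{N-2}\,w(\hat x)$. For the remaining range $0 < x_1 < R/4$ I would instead invoke Schwarz reflection: setting $x_0 = (0,x_2,\ldots,x_N)$, the ball $B_{R/2}(x_0)$ is symmetric about $\{y_1=0\}$ and contained in $B_R(x)\subset D_2$, and $w$ vanishes on $B_{R/2}(x_0)\cap\{y_1=0\}$; hence the odd extension $\tilde w$ of $w$ across $\{y_1=0\}$ is harmonic in $B_{R/2}(x_0)$. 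Since $\tilde w(x_0)=0$ and $|x-x_0|=x_1\leq R/4$, integrating the standard interior gradient estimate for $\tilde w$ along the segment from $x_0$ to $x$ yields
$$
w(x) = \tilde w(x) \leq \frac{C_N\,x_1}{R}\sup_{B_{R/2}(x_0)\cap D_2^+} w
$$
for a constant $C_N$ depending only on $N$. By the maximum principle (using once more $w=0$ on $\{y_1=0\}$), the supremum on the right is attained at some $y^*\in\pa B_{R/2}(x_0)\cap\{y_1\geq 0\}$; a short Harnack chain through $D_2^+\cap\{y_1\geq R/8\}$, of length depending only on $N$, is then meant to transfer the bound to $w(\hat x)$, closing the argument.

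The main technical difficulty I expect is in this last step: if $y^*$ happens to lie close to $\{y_1=0\}$, a Harnack chain cannot safely avoid the flat boundary and one has either to iterate the same reflection/gradient bound at $y^*$ or to invoke a flat-boundary Harnack principle to compare $w(y^*)$ with $w$ at a point lying strictly inside $D_2^+$. The delicate part is to arrange the iteration so that the accumulated constant still depends only on $N$, and not on $x_1$ nor on the diameter of $D_2$.
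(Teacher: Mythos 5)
Your overall reduction is the same as the paper's: find, for each $x$, a companion point $\hat x=(R/2,x_2,\dots,x_N)\in\ovr{E_R}$ with $w(x)\le M\,w(\hat x)$, $M=M(N)$, and then conclude by Lemma \ref{proposition Harnack}. The cases $x_1\ge R/2$ and $R/4\le x_1<R/2$ are handled correctly (the latter by interior Harnack in $B_{R/2}(\hat x)\subset D_2^+$). The problem is the range $0<x_1<R/4$, where your argument has a genuine gap that you yourself flag but do not close. The odd reflection plus the interior gradient estimate does give $w(x)\le C_N (x_1/R)\sup_{B_{R/2}^+(x_0)}w$, but you then need to dominate $\sup_{B_{R/2}^+(x_0)}w$ by $M(N)\,w(\hat x)$; every point of $B_{R/2}^+(x_0)$ has first coordinate less than $R/2$, so this supremum is taken entirely over points \emph{outside} $\ovr{E_R}$ and cannot be controlled by Lemma \ref{proposition Harnack}. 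Bounding it by $w(\hat x)$ is exactly the Carleson-type boundary Harnack inequality, i.e.\ the very statement you are trying to establish near the flat boundary; as written, the step is circular. Your maximum-principle observation does not help, since the maximizing point $y^*$ on the spherical part of the boundary may itself lie arbitrarily close to $\{y_1=0\}$, so no Harnack chain of $N$-dependent length through $\{y_1\ge R/8\}$ reaches it.

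The paper avoids all of this by quoting the boundary Harnack inequality directly (Caffarelli--Salsa, Theorem 11.5): $\sup_{B^+_{R/2}(x_0)}w\le M\,w(\hat x)$ with $M=M(N)$, which disposes of the entire range $0<x_1<R/2$ in one stroke. If you prefer to keep your self-contained route, the standard way to close it is an absorption argument: by your gradient estimate, points $y$ with $y_1\le cR$ satisfy $w(y)\le C_N c\,\sup_{B^+_{3R/4}(x_0)}w$; choosing $c$ with $C_Nc<1/2$ lets you absorb the near-boundary contribution into the left-hand side, reduce the supremum to the region $\{y_1\ge cR\}$, and reach $\hat x$ from there by a Harnack chain of length depending only on $N$. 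Either cite the Carleson estimate, as the paper does, or carry out this absorption explicitly; as it stands, the proposal is not a complete proof.
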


\begin{proof}
For $x \in \overline{D_1^+} \cap \ovr{{E}_R}$ we clearly have that $w(x) \leq \sup_{{E}_R} w $.
\par
If $x \in\ovr{D_1^+} $ and $0<x_1<R/2$, we estimate $w(x)$ in terms of the value $w(\hat{x})$ with $\hat{x}=(R/2,x_2,\ldots,x_N)\in\ovr{{E}_R}$. In order to do this, we use the following {\it boundary Harnack's inequality} (see for instance \cite[Theorem 11.5]{CS}):
\begin{equation} \label{bound Harnack I}
\sup_{B_{R/2}^+ (x_0)} w \leq M\,w(\hat{x}),
\end{equation}
with $x_0=(0,x_2,\ldots,x_N)$; here, $M \geq 1$ is a constant only depending $N$. Since $M\geq 1$ and $w(\hat{x})\leq \sup_{{E}_R} w$, Lemma \ref{proposition Harnack} yields the conclusion.
\end{proof}

We notice that since $\pa G$ is of class $C^{2,\alpha}$, then it satisfies a uniform interior ball condition of (optimal) radius $\rho>0$ at each point of the boundary. The following result is our analog of \cite[Proposition 1]{ABR}.

\begin{theorem} \label{th:abr}
Let $\Om$ and $G$ be as in Theorem \ref{th:torsionstability} and consider a solution $u\in C^{2,\al}(\Om)$ of \eqref{torsion}.

Pick a unit vector $\om\in\RN$ and let $G_{m}$ be the maximal cap of $G$ in the direction $\om$ as defined by \eqref{definitions} and \eqref{m def}. Then for (a connected component of) 
$G_m$ we have that
\begin{equation}\label{smallness}
w^{m}\leq M C\,\usn \ \mbox{ in } \ G_m;
\end{equation}
here, $w^{m}$ is defined by \eqref{wm} and $C$ is a constant depending on $N$, $R$, the diameter of $G$
and the $C^2$-regularity of $\pa G$ (see \eqref{C3} below).
\end{theorem}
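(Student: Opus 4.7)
The plan is to execute the moving planes argument from the proof of Theorem \ref{thm symmetry} in a quantitative form, using Lemma \ref{proposition Harnack 2} to propagate a pointwise bound into a global one. After a rigid motion we may assume $\om = e_1$ and $\pi_m = \{x_1 = 0\}$; then $w^{m}(x) = u(x^{m}) - u(x)$ is harmonic and nonnegative in $\Om_m$ (by the strong maximum principle argument used in Section \ref{section symmetry thm}) and vanishes on $\pi_m \cap \ovr{\Om}$. Because $\Om = G + B_R$, one has $B_R(x) \subset \Om$ for every $x \in \ovr{G}$, so Lemma \ref{proposition Harnack 2} is applicable with $D_1 = G$ and $D_2 = \Om$. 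It therefore suffices to exhibit a single point $z \in \ovr{G_m}$ at which either $w^{m}(z)/z_1$ (when $z_1 > 0$) or $\pa w^{m}/\pa x_1(z)$ (when $z_1 = 0$) is controlled by a universal multiple of $\usn$. Such a point is supplied by the two critical configurations (i), (ii) recalled at the end of Section \ref{section symmetry thm}.

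In case (i), the internal tangency furnishes $P \in \pa G \cap \pa G^{m}$ with $P \notin \pi_m$; the reflection $z := P^{m}$ then lies in $\ovr{G_m} \cap \pa G$ and has $z_1 > 0$. Since both $P$ and $z$ belong to $\pa G$, the definition of $\usn$ yields
\begin{equation*}
0 \leq w^{m}(z) = u(P) - u(z) \leq \usn\,|P - z| = 2\,\usn\,z_1,
\end{equation*}
so that $w^{m}(z)/z_1 \leq 2\,\usn$, and \eqref{estim z1 >0} produces \eqref{smallness}. In case (ii), the orthogonality of $\pi_m$ to $\pa G$ at a point $z := Q \in \pa G \cap \pi_m$ means precisely that $\om = e_1$ is tangent to $\pa G$ at $Q$. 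Since $\pa G \in C^{2,\al}$, one can pick a $C^1$ curve $\gamma$ in $\pa G$ with $\gamma(0) = Q$ and $\gamma'(0) = \om$; differentiating $u \circ \gamma$ at $0$ and using that $u \in C^2$ on a neighborhood of $\pa G \subset \Om$ gives $|\pa u/\pa x_1(Q)| \leq \usn$. Combining this with the identity $\pa w^{m}/\pa x_1 = -2\,\pa u/\pa x_1$ on $\pi_m$ yields $0 \leq \pa w^{m}/\pa x_1(Q) \leq 2\,\usn$, whereupon \eqref{estim z1 =0} again delivers \eqref{smallness}.

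The principal subtlety to monitor is the bookkeeping when $G_m$ is disconnected --- a possibility already flagged by the parenthetical \emph{a connected component of} $G_m$ in the statement. The moving plane procedure must be run component by component, with the crucial point being that on the closure of each relevant component at least one of the two critical configurations (i), (ii) is realized; this is the standard component-wise adaptation of the method. Modulo that check, the argument above produces \eqref{smallness} with a constant of the form $2MC_0$, where $C_0$ is the quantity in \eqref{C(diam D_1,R,N)}, thereby inheriting the dependence on $N$, $R$, $\diam G$ and the $C^{2,\al}$-regularity of $\pa G$ that will be recorded in \eqref{C3}.
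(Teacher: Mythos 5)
Your proof is correct and follows the same overall architecture as the paper's: normalize $\om=e_1$, pass to the connected component of $G_m$ (and of $\Om_m$) attached to the critical touching point, and feed the point $z=P^m$ (case (i)) or $z=Q$ (case (ii)) into Lemma \ref{proposition Harnack 2}. Case (ii) is handled identically in the paper. The one place where you genuinely diverge is the estimate of $w^m(P^m)/\dist(P^m,\pi_m)$ in case (i): you invoke the definition of $\usn$ directly, via $|u(P)-u(P^m)|\le \usn\,|P-P^m|=2\,\usn\,\dist(P^m,\pi_m)$, which is legitimate since the seminorm is defined with the \emph{Euclidean} distance in the denominator, and it yields the clean constant $2$. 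The paper instead splits into $|P-P^m|\ge\rho$ and $|P-P^m|<\rho$, and in the second case projects the segment $[P,P^m]$ onto $\pa G$ to bound $u(P)-u(P^m)$ by the curve's length times the tangential gradient; that detour is what one would need if $\usn$ only controlled increments of $u$ \emph{along} $\pa G$ (an intrinsic/geodesic seminorm or a tangential-gradient bound), and it is why the paper's constant \eqref{C3} carries the extra factor $\max(1,2\diam G/\rho,2\rho\,C'')$. Your shortcut buys a simpler and in fact sharper constant under the stated definition of $\usn$; the paper's version is the more robust one. Finally, the component bookkeeping you flag as "to be checked" is not actually needed: the theorem only asserts \eqref{smallness} on \emph{a} connected component of $G_m$, namely the one meeting $B_R(P^m)$ (resp. $B_R(Q)$), which is exactly the component on which the critical configuration is realized, so there is nothing further to verify there.
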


\begin{proof}
We adopt the same notations introduced for the proof of Theorem \ref{thm symmetry} and we can always assume that $\om = e_1$. The function $w=w^m$ satisfies \eqref{wmsys} and
$$
w=0 \ \mbox{ on } \ \pa G_m\cap\pi_m.
$$
Let us assume that case (i) of the proof of Theorem \ref{thm symmetry} occurs. Without loss of
generality, we shall denote by the same symbols $G_m$ and $\Om_m$, respectively, the connected component of $G_{m}$ which intersects $B_R(P^m)$ and the corresponding connected component of $\Om_m$. Modulo a translation in the direction of $e_1$, we can apply Lemma \ref{proposition Harnack 2} with $D_1^+=G_m$ and $D_2^+=\Om_m$ and $z=P^m$.
We obtain the estimate
\begin{equation} \label{w estim}
w\leq M C' \frac{w(P^m)}{\dist (P^m,\pi_{m})}\ \mbox{ in } \ G_m,
\end{equation}
where $C'$ is computed by means of \eqref{C(diam D_1,R,N)}, that is
\begin{equation*}
C'=\max(2^{N-2} R, \diam G)\,C_N^{(2 + \diam G / R)^N};
\end{equation*}
here, we have used that $\diam \Om= 2R+\diam G$.
\par
In order to estimate $w(P^m)/\dist (P^m,\pi_{m})$, we notice that $|P-P^{m}| = 2\,\dist(P^m,\pi_{m})$ and we distinguish two cases.
\par
If $|P-P^m|\geq \rho$, since $P$ and $P^{m}$ lie on $\pa G$, then
\begin{equation*}
w(P^m) = u(P)-u(P^m) \leq \diam(G) \, \usn ,
\end{equation*}
and hence we easily obtain that
\begin{equation*}\label{estim I}
\frac{w(P^m)}{\dist (P^m,\pi_{m})} \leq \frac{2 \diam(G)}{\rho}\, \usn.
\end{equation*}
If instead $|P-P^{m}| < \rho$, every point of the segment joining $P$ to $P^m$ is at distance less than $\rho$ from a connected component of $\pa G$. The curve $\gamma$ obtained by projecting that segment on $\pa G$ has length bounded by $C''\,|P-P^m|$, where $C''$ is a constant depending on $\rho$ and on the regularity of $\pa G$. An application of the mean value theorem to the function $u$ restricted to $\gamma$ gives that $u(P)-u(P^m)$ can be estimated by the length of $\ga$ times the maximum of the tangential gradient of $u$ on $\pa G$. Thus,
\begin{equation}\label{estim II}
w(P^m) \leq 2\,C''\,\dist(P^m,\pi_{m})\,\usn.
\end{equation}
From \eqref{C(diam D_1,R,N)} and \eqref{w estim}--\eqref{estim II} we then get \eqref{smallness}.

Now, let us assume that case (ii) of the proof of Theorem \ref{thm symmetry} occurs.
Again, without loss of generality, we denote by the same symbol $G_m$ the connected component of $G_m$ which intersects $B_R(Q)$, and $\Om_m$ accordingly. Then, we apply Lemma \ref{proposition Harnack 2} with $D_1^+=G_m$, $D_2^+=\Om_m$ and $z=Q$, and obtain that
\begin{equation*}
w(x) \leq M\,C\,\frac{\pa w(Q)}{\pa \omega},
\end{equation*}
for all $x\in G_m$. Since $\om$ belongs to the tangent hyperplane to $\pa G$ at $Q$, we again obtain \eqref{smallness}.
\par
An inspection of the calculations in this proof informs us that the constant $C$ in \eqref{smallness}
can be chosen as
\begin{equation} \label{C3}
C=\max\left(1,\frac{2\diam G}{\rho},2\rho\,C''\right) \max\left(R,\frac{\diam G}{2^{N-2}}\right)\,C_N^{[2+\frac{\diam G}{R}]^N}.
\end{equation}
\end{proof}

From now on, the proof of our Theorem \ref{th:torsionstability} follows the
arguments used in \cite[Sections 3 and 4]{ABR}, with a major change (we use our Theorem \ref{th:abr} instead of \cite[Proposition 1]{ABR}) and some minor changes that we shall sketch below.
\par
The key idea is to use the smallness of $w^m$ in (the relevant connected component of) $G_m$ to show that $G$ is almost equal to the symmetric open set $X$ which is defined as the interior of
$$
G_m\cup G^{m} \cup (\pa G_m \cap \pi_m).
$$
In order to do that, we need {\it a priori} bounds on $u$ from below in
terms of the distance function from $\pa G$. In \cite{ABR}, such bounds are derived in Proposition 4, where it is proven that, if $u=0$ on $\pa \Om$, then
there exist positive constants $K_1$ and $K_2$ such that
\begin{equation}
\label{boundsdist}
K_1\, \dist(x,\pa\Om)\le u(x)\le K_2\, \dist(x,\pa\Om)\ \mbox{ for all }\ x\in\Om;
\end{equation}
$1/K_1$ and $K_2$ are bounded by a constant depending on the diameter of $\Om$ and the $C^{2,\alpha}$-regularity of $\pa\Om$.
\par
However, if we examine our situation, we notice two facts: on one hand, differently from \cite{ABR}, our symmetrized set $X$ is the reflection of a connected component of $G$ --- compactly contained in $\Om$
at distance $R$ from $\pa\Om$ --- and we need not extend the relevant estimates up to the boundary of $\Om$, thus obtaining a better rate of stability. On the other hand, since we are dealing with the distance from $\pa G$ instead of that from $\pa \Om$ and $u$ {\it is not constant on} $\pa G$, we need a stability version of the first inequality in \eqref{boundsdist}. This is done in the following lemma.

\begin{lm} \label{lemma bound on dist}
There exists a positive constant $K$ depending on $\diam G$ and the $C^{2,\alpha}$ regularity of $\partial G$ such that
\begin{equation}\label{Kbound}
K\,\dist(x,\pa G) + \min_{\pa G} u \leq u(x),
\end{equation}
for every $x \in \ovr{G}$.

Moreover, if $x \in \partial X$, then
\begin{equation}\label{bound on dist}
\dist(x,\pa G) \leq C_* \usn,
\end{equation}
with
\begin{equation}\label{C4}
C_*=\frac{C+\diam(G)}{K},
\end{equation}
and where $C$ is given by \eqref{C3}.
\end{lm}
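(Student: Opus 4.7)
\medskip

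\noindent\textbf{Plan for Lemma \ref{lemma bound on dist}.} I would split the proof into the two displayed bounds, treating them in order.

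For the first inequality \eqref{Kbound}, I would first observe that $v:=u-\min_{\pa G}u$ is non-negative on $\pa G$ and satisfies $-\De v=1$ in $G$; comparing with the torsion function $u_G$ of $G$ (that is, $-\De u_G=1$ in $G$, $u_G=0$ on $\pa G$), the maximum principle gives $v\ge u_G$ throughout $\overline{G}$, so it suffices to prove the classical linear lower bound $u_G(x)\ge K\,\dist(x,\pa G)$ in $\overline{G}$. I would obtain this via a standard barrier argument that exploits the uniform interior ball condition of radius $\rho$ guaranteed by the $C^{2,\al}$ regularity of $\pa G$: on a tangent interior ball $B_\rho(x_0)\subset G$ at a boundary point $y\in\pa G$, comparison with $\varphi(x)=(\rho^2-|x-x_0|^2)/(2N)$, which satisfies $-\De\varphi=1$ and $\varphi=0$ on $\pa B_\rho(x_0)$, gives $u_G\ge\varphi$, and a direct computation along the segment $\overline{y\,x_0}$ yields $\varphi(x)\ge(\rho/2N)\,\dist(x,\pa G)$ whenever $\dist(x,\pa G)\le\rho$; for points further inside, a superharmonic minimum-principle argument transfers this estimate (with a constant controlled by $\rho$ and $\diam G$) uniformly to all of $\overline{G}$, producing a $K$ depending only on $\diam G$ and the $C^{2,\al}$ regularity of $\pa G$.

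For the second inequality \eqref{bound on dist}, I would exploit the explicit description of $\pa X$ coming from its definition as the interior of $G_m\cup G^m\cup(\pa G_m\cap\pi_m)$: every $x\in\pa X$ either lies on $\pa G\cap\{x\cdot\om\ge m\}$, in which case $\dist(x,\pa G)=0$ trivially, or is the reflection $x=y^m$ of some $y\in\pa G\cap\{y\cdot\om\ge m\}$ through the hyperplane $\pi_m$. In the second case, the definition \eqref{wm} immediately gives
\[
u(x)=u(y)+w^m(y),
\]
and Theorem \ref{th:abr}, applied at the boundary point $y\in\overline{G_m}$ (by continuity of $u$ up to $\pa G$), yields $w^m(y)\le MC\,\usn$. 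Since $y\in\pa G$, the definition of the semi-norm gives
\[
u(y)\le\min_{\pa G}u+\osc_{\pa G}u\le\min_{\pa G}u+\diam(G)\,\usn.
\]
Combining the last two bounds with \eqref{Kbound} evaluated at $x$ produces
\[
K\,\dist(x,\pa G)\le u(x)-\min_{\pa G}u\le\bigl(MC+\diam(G)\bigr)\,\usn,
\]
and dividing by $K$ gives \eqref{bound on dist}, with the dimensional factor $M$ absorbed into the constant $C$ of \eqref{C3} to match the form \eqref{C4}.

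The main obstacle I anticipate is in Part 1, namely securing the Hopf-type linear lower bound for the torsion function $u_G$ with a constant $K$ depending \emph{quantitatively only} on $\diam G$ and the $C^{2,\al}$ regularity of $\pa G$, independently of any feature of $\Om$ or of $R$; once this estimate is in place, Part 2 reduces to straightforward bookkeeping of the three contributions above.
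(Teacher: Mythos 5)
Your proposal is correct and follows essentially the same route as the paper: the same comparison with the torsion function of $G$ shifted by $\min_{\pa G}u$ for \eqref{Kbound}, and the same three-term decomposition $u(x)-\min_{\pa G}u = w^m(x^m) + \bigl(u(x^m)-\min_{\pa G}u\bigr)$ combined with Theorem \ref{th:abr} and \eqref{Kbound} for \eqref{bound on dist}. The only difference is cosmetic: where you carry out the interior-ball barrier argument for the Hopf-type lower bound on the torsion function, the paper simply cites the corresponding estimate \eqref{boundsdist} from \cite{ABR} applied to $G$ in place of $\Om$.
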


\begin{proof}
Let $v$ be such that
\begin{equation*}
\Delta v = -1 \ \mbox{ in } \ G,\quad v=\min_{\pa G} u \ \mbox{ on } \ \pa G;
\end{equation*}
by the comparison principle, $v \leq u$ on $\ovr{G}$. Since $v - \min_{\pa G} u = 0$ on $\pa G$,
applying \eqref{boundsdist} to $\pa G$ instead of $\pa \Om$ yields that
\begin{equation*} \label{K con v}
K\,\dist (x,\pa G) \leq v(x) - \min_{\pa G} u \ \mbox{ for any } \ x\in G;
\end{equation*}
 here, we have set $K=K_1$. Since $v \leq u$ we obtain \eqref{Kbound}.
\par
Now, we prove \eqref{bound on dist}. If $x \cdot \omega \geq m $ then $x \in \pa G$ and \eqref{bound on dist} clearly holds. If $x \cdot \omega < m $, we notice that $x^{m} \in \pa G$ and then, by Theorem \ref{th:abr},
\begin{equation*} \label{eq 5}
u(x) = w^m(x^{m}) + u(x^{m}) \leq C\,\usn + u(x^{m}).
\end{equation*}
Since $x^{m}\in \pa G$, then
\begin{equation*}
u(x^m)- \min_{\pa G} u \leq  \diam(G) \, \usn;
\end{equation*}
the last two inequalities then give that
\begin{equation*}\label{bound on dist proof2}
u(x) - \min_{\pa G} u \leq (C+\diam(G))\usn.
\end{equation*}
This inequality and \eqref{Kbound} give \eqref{bound on dist} at once.
\end{proof}

\par
For $s>0$, let $G_\shortparallel^s$ be the subset of $G$  {\it parallel} to $\pa G$
at distance $s$, i.e.
\begin{equation*}
G_\shortparallel^s=\{ x \in G :\ \dist(x,\pa G)>s\}.
\end{equation*}
We notice that $G_\shortparallel^s$ is connected for $s<\rho /2$; indeed,
any path in $G$ connecting any two points $x,y\in G_\shortparallel^s$ can be moved inwards into $G_\shortparallel^s$ by the normal field on $\pa G$.
\par
Next result is crucial to prove Theorem \ref{th:torsionstability}.

\begin{theorem}
\label{th:prop5}
Let
\begin{equation*}
\usn < \frac{\rho}{4C_*}.
\end{equation*}
Then, for every $s$ in the interval $(C_*\usn \, ,\rho/2)$, we have that
\begin{equation}
\label{XtOms}
G_\shortparallel^{s} \subset X \subset G.
\end{equation}
\end{theorem}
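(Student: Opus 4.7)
The plan is to prove the two inclusions in \eqref{XtOms} separately. The right inclusion $X\subset G$ is immediate: by definition of the critical position $m$ we have $G^m\subset G$, while $G_m\subset G$ is trivial and $\pa G_m\cap\pi_m\subset\overline G$; hence $S:=G_m\cup G^m\cup(\pa G_m\cap\pi_m)\subset\overline G$ and, since $G$ is open, $X=\mathrm{int}(S)\subset G$.

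For the key inclusion $G_\shortparallel^s\subset X$, I would argue by connectedness, with Lemma~\ref{lemma bound on dist} as the crucial input. That lemma guarantees $\dist(x,\pa G)\le C_*\usn$ for every $x\in\pa X$; coupled with the hypothesis $s>C_*\usn$, it yields $G_\shortparallel^s\cap\pa X=\emptyset$. Since $X$ is open, $G_\shortparallel^s$ then decomposes as the disjoint union of the two relatively open sets $A:=G_\shortparallel^s\cap X$ and $B:=G_\shortparallel^s\cap(\overline X)^c$. Because $G_\shortparallel^s$ is connected for $s<\rho/2$ (as recalled just before the theorem), it suffices to exhibit a single point in $A$ to force $B=\emptyset$ and conclude.

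To produce such a point, I would exploit the uniform interior ball condition of radius $\rho$ at the distinguished boundary point arising from the critical position. In case (i) of Section~\ref{section symmetry thm}, the tangency of $G^m$ to $\pa G$ at $P\in\pa G\setminus\pi_m$ identifies the outward normal $\nu_P$ with the $\pi_m$-reflection of $\nu_{P^m}$, hence $\nu_P\cdot\omega+\nu_{P^m}\cdot\omega=0$; the centers $x_P=P-\rho\nu_P$ and $x_{P^m}=P^m-\rho\nu_{P^m}$ of the two interior balls therefore satisfy $x_P\cdot\omega+x_{P^m}\cdot\omega=P\cdot\omega+P^m\cdot\omega=2m$, so at least one of them has $\omega$-coordinate at least $m$. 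Being an interior point of $G$ of depth $\rho>s$, this center lies in $A$. In case (ii), the orthogonality $\pi_m\perp\pa G$ at $Q\in\pa G\cap\pi_m$ forces $\nu_Q\perp\omega$, and the center $x_Q=Q-\rho\nu_Q$ sits on $\pi_m$ at depth $\rho$, which is again a point of $A$.

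Once $A\ne\emptyset$ has been secured, the connectedness of $G_\shortparallel^s$ immediately forces $B=\emptyset$, completing the proof of \eqref{XtOms}. The step I expect to require the most care is the Case (i) identification of $\nu_P$ with the $\pi_m$-reflection of $\nu_{P^m}$, together with the routine but necessary verification that any interior point of $G\cap\pi_m$ actually belongs to $\mathrm{int}(S)=X$---established by checking that a small ball around such a point, being inside $G$, is split symmetrically by $\pi_m$ into halves lying in $G_m$ and $G^m$ respectively.
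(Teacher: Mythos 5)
Your proof is correct and follows essentially the same route as the paper's: $X\subset G$ is immediate, and $G_\shortparallel^{s}\subset X$ is obtained from the connectedness of $G_\shortparallel^{s}$, the bound $\dist(\cdot,\pa G)\le C_*\usn<s$ on $\pa X$ supplied by Lemma~\ref{lemma bound on dist}, and the non-emptiness of $X\cap G_\shortparallel^{s}$. The only differences are cosmetic: the paper argues via the first exit point of a path rather than a clopen decomposition, and it secures non-emptiness by simply noting that the maximal cap contains a ball of radius $\rho/2$, a step you instead carry out explicitly with the interior balls at the critical tangency (or orthogonality) point.
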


\begin{proof}
In order to prove \eqref{XtOms}, we proceed by contradiction. Since the maximal cap $G_{m}$ contains a ball of radius $\rho/2$, then $X$ intersects $G_\shortparallel^{s}$.
Let us assume that there exist a point $y \in G_\shortparallel^{s} \setminus X$ and take $x \in X \cap G_\shortparallel^{s}$. Since $G_\shortparallel^{s}$ is connected, we can join $x$ to $y$ with a path contained in $G_\shortparallel^{s}$. Let $z$ be the first point on this path that falls outside $X$; we notice that $z \in \pa X \cap G_\shortparallel^{s}$.

If $z\cdot \omega \geq m$, then $z \in \pa G$, that contradicts the fact that  $\dist(z,\pa G) > s$. If instead $z\cdot \omega < m$, a contradiction is reached by observing that, by Lemma \ref{lemma bound on dist}, $s<\dist(z,\pa G) \leq C_* \usn$.
\end{proof}

We have now all the ingredients to complete the proof of Theorem \ref{th:torsionstability}.

\vskip.2cm

\begin{proofA}
An admissible choice of the parameter $s$ in Theorem \ref{th:prop5} is
\begin{equation}\label{s}
s=2C_* \, \usn.
\end{equation}
Theorem \ref{th:prop5} then implies that, for any $x\in\pa G$, there is a point
$y\in\pa G$ such that
\begin{equation}
\label{cor1}
|x^{m}-y|\le 2s,
\end{equation}
(see \cite[Corollary 1]{ABR}).
It is important to observe that $s$ does not depend on the direction $\om.$
Thus, we can choose $\om$ to be each one of the coordinate directions
$e_1,\dots, e_N;$  by \eqref{definitions} and \eqref{m def}, these choices define $N$ hyperplanes $\pi_{m_1},\dots, \pi_{m_N}$
each in critical position with respect to each direction $e_1,\dots, e_N,$ respectively.
\par
Let $O$ be the point of intersection of the $N$ hyperplanes $\pi_{m_1},\dots, \pi_{m_N}$
and denote by $x^O$ the reflection $2 O-x$ of $x$ in $O;$ since $x^O$ is obtained by $N$
successive reflections with respect to the planes $\pi_{m_1},\dots, \pi_{m_N},$
by \eqref{cor1} one can infer that for any point $x\in\pa G$ there is a point
$y\in\pa G$ such that
\begin{equation*}
\label{cor2}
|x^O-y|\le 2Ns
\end{equation*}
(see \cite[Corollary 2]{ABR}).
\par
The point $O$ can now be chosen as the center of the balls $B_{r_i}$ and $B_{r_e}$ in
\eqref{stabil1}. In fact, since \eqref{cor1} holds, \cite[Proposition 6]{ABR} guarantees
that, for any direction $\om,$
\begin{equation*}
\label{cor3}
\dist(O,\pi_{m})\le 4N\, (1+\diam G)\, s
\end{equation*}
with $s$ given by \eqref{s}.
It is clear that \eqref{stabil1} holds with
$$
r_i=\min_{x\in\pa G}|x-O| \ \mbox{ and } \ r_e=\max_{x\in\pa G}|x-O|.
$$
Finally,
\cite[Proposition 7]{ABR} states that
$$
r_e-r_i\le 8N \, (1+\diam G)\, s.
$$
This last estimate and \eqref{s} give \eqref{stabil2}.
\end{proofA}

\begin{remark} \label{rem C}
We notice that the constant $C$ in Theorem \ref{th:torsionstability} is given by
\begin{equation*}
    C=16 N(1+\diam G)\,C_*.
\end{equation*}
Hence, the dependence of $C$ on $R$ is of the following type:
\begin{equation*}
C=O(A^{R^{-N}})\ \textmd{ as } R \to 0^+,
\end{equation*}
where $A>1$ is a constant and
\begin{equation*}
C=O(R) \ \textmd{ as } R \to +\infty.
\end{equation*}
\end{remark}

\setcounter{equation}{0}

\section{Semilinear equations} \label{section semilinear}

In this section, we shall show how Theorem \ref{th:torsionstability} can be extended to the case in
which \eqref{torsion} is replaced by \eqref{semilinear}. The structure of the proofs is the same: for this reason, we will limit ourselves to identify only the relevant passages that need to be modified.
In what follows, we will use the standard norms:
\begin{eqnarray*}
&&\nr u\nr_{C^1(\Om)}=\max_{\ovr\Om} |u|+\max_{\ovr\Om} |D u|, \\
&&\nr u\nr_{C^2(\Om)}=\nr u\nr_{C^1(\Om)}+\max_{\ovr\Om} |D^2 u|.
\end{eqnarray*}

Let $f$ be a locally Lipschitz continuous function with $f(0) \geq 0$ and consider the following problem
\begin{equation}\label{pb semilinear}
\begin{cases}
\Delta u + f(u) = 0 , & \textmd{in } \Om, \\
u>0, & \textmd{in } \Om, \\
u=0, & \textmd{on } \pa \Om,
\end{cases}
\end{equation}
where $\Om = G+ B_R$.

Also in this case,  if we add the overdetermination \eqref{parallel},
then the $G$ must be a ball and $u$ is radially symmetric. The details of this result
can be reconstructed  from \cite{CMS}.
\par
In the following theorem, we assume the domain $G$ as in Theorem \ref{th:torsionstability} .

\begin{theorem}
\label{th:semilinear}
Let $u\in C^2(\ovr{\Om})$ be the solution of \eqref{pb semilinear} with $-\pa u / \pa \nu \geq d_0 > 0$ on $\pa \Om$.
\par
If $R < \frac12\,d_0\,\| u\|^{-1}_{C^2(\Om)}$, then there exist constants $\veps, C>0$ such that, if
$$
\usn \leq \veps,
$$
then there are two concentric balls $B_{r_i}$ and $B_{r_e}$ such that \eqref{stabil1} and \eqref{stabil2} hold, that is
\begin{eqnarray*}
&B_{r_i}\subset\Om\subset B_{r_e}\quad\mbox{ and } \\
&r_e-r_i\le C\ \usn.
\end{eqnarray*}
The constants $\veps$ and $C$ only depend on $N$, $R$, the $C^{2,\alpha}$-regularity
of $\pa G$ and on upper bounds for the diameter of $G$, $\max_{\ovr\Om} |u|$ and
the Lipschitz constant of $f$.
\end{theorem}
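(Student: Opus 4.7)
The plan is to mirror the architecture of the proof of Theorem \ref{th:torsionstability}, upgrading each linear ingredient to tolerate the semilinear source $f(u)$. For a fixed unit vector $\omega$ with associated critical hyperplane $\pi_{m}$, I would again work with the reflection function $w^{m}(x) = u(x^{m}) - u(x)$ on the cap $\Omega_{m}$. Since $f$ is locally Lipschitz and $u$ is bounded, setting $c(x) = \bigl(f(u(x^{m})) - f(u(x))\bigr)/(u(x^{m})-u(x))$ where the denominator is nonzero and $c(x)=0$ otherwise, one has $\|c\|_\infty \leq L$ (where $L$ denotes a Lipschitz constant of $f$ on $[0,\max_{\ovr\Omega} u]$) and
\[
\Delta w^{m} + c(x)\, w^{m} = 0 \quad \mbox{in } \Omega_{m}.
\]
Thus $w^{m}$ is a nonnegative solution of a linear uniformly elliptic equation with bounded coefficients on $\Omega_m$, to which the strong maximum principle and Hopf's lemma still apply; in particular the analog of Theorem \ref{thm symmetry} under \eqref{pb semilinear}--\eqref{parallel} is at our disposal (see \cite{CMS}).

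Next, I would replace the harmonic Harnack inequalities used in Lemmas \ref{proposition Harnack} and \ref{proposition Harnack 2} with the classical interior and boundary Harnack inequalities for nonnegative solutions of $\Delta w + c(x) w = 0$ with $\|c\|_\infty \leq L$; the chaining-of-balls arguments are identical modulo constants that now depend also on $L$ and $\diam \Omega$. Repeating the proof of Theorem \ref{th:abr} then gives $w^{m} \leq C\,\usn$ on $G_{m}$: in the tangency case (i), both $P$ and $P^{m}$ belong to $\pa G$ and $w^{m}(P)$ is controlled by $\usn$ via either $\diam G$ or a short arc on $\pa G$, exactly as in Section \ref{section stability estimates}; in the tangential case (ii), $\pa u/\pa \omega$ at $Q\in \pa G\cap \pi_m$ is a tangential derivative and is therefore bounded by $\usn$.

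The extra smallness hypothesis $R < \tfrac12\,d_0\,\|u\|^{-1}_{C^2(\Omega)}$ plays a double role. A second-order Taylor expansion of $u$ around its nearest boundary point, together with $-\pa u/\pa\nu \geq d_0 > 0$ on $\pa\Omega$, yields
\[
u(x) \geq \tfrac{d_0}{2}\,\dist(x,\pa\Omega) \quad \mbox{for every } x \in \ovr\Omega \setminus G,
\]
so $u>0$ on the strip $\ovr\Omega\setminus G$, which guarantees that the method of moving planes can be initialized in every direction and hence that the parameter $m$ in \eqref{m def} is well defined. The same expansion furnishes the uniform lower bound $\min_{\pa G} u \geq d_0 R / 2 > 0$, indispensable for the barrier construction below.

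The main obstacle is the semilinear counterpart of Lemma \ref{lemma bound on dist}, namely an estimate of the form $u(x) \geq K\,\dist(x,\pa G) + \min_{\pa G} u$ on $\ovr G$. The direct comparison with the torsion potential of Section \ref{section stability estimates} is unavailable, and I would instead argue via barriers: writing $f(u) = f(\min_{\pa G} u) + h(x)\bigl(u - \min_{\pa G} u\bigr)$ with $\|h\|_\infty \leq L$, the function $\tilde u = u - \min_{\pa G} u$ satisfies a linear elliptic equation in $G$ with bounded zeroth-order coefficient, is nonnegative on $\pa G$ and vanishes at some point of $\pa G$. A standard Hopf-type barrier in an interior ball tangent to $\pa G$ — whose radius is controlled by the $C^{2,\alpha}$ regularity of $\pa G$ — gives linear growth of $\tilde u$ in a uniform tubular neighborhood of $\pa G$, while the positivity of $u$ on the compact set $\ovr G$ supplies a lower bound further inside. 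With this ingredient in place, Theorem \ref{th:prop5} and the final geometric step — involving the intersection of $N$ orthogonal critical hyperplanes — transfer verbatim, yielding \eqref{stabil1}--\eqref{stabil2} with constants of the stated type.
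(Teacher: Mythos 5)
Your overall architecture matches the paper's: the linearization $\Delta w^{m}+c(x)w^{m}=0$ with $|c|\le L$, the replacement of the harmonic Harnack inequalities by interior and boundary Harnack inequalities for equations with a bounded zeroth-order coefficient (the paper uses \cite[Theorem 8.20]{GT} and \cite[Theorem 1.3]{BCN}), the unchanged Theorem \ref{th:abr}, and the final geometric step are all as in the paper. The genuine gap is in your treatment of the semilinear analog of Lemma \ref{lemma bound on dist}, i.e.\ the lower bound $u(x)\ge K\,\dist(x,\pa G)+\min_{\pa G}u$. Your barrier argument does not close. First, the decomposition $f(u)=f(\min_{\pa G}u)+h(x)\bigl(u-\min_{\pa G}u\bigr)$ leaves $\tilde u=u-\min_{\pa G}u$ satisfying the \emph{inhomogeneous} equation $\Delta\tilde u+h\tilde u=-f\bigl(\min_{\pa G}u\bigr)$, and the sign of $f(\min_{\pa G}u)$ is not controlled (only $f(0)\ge 0$ is assumed); if it is negative, $\tilde u$ is a subsolution and the comparison goes the wrong way. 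Second, even setting that aside, a Hopf-type barrier in an interior tangent ball yields a \emph{quantitative} linear growth only if one has a quantitative positive lower bound for $\tilde u$ at the center of that ball; ``positivity of $u$ on $\ovr G$'' gives no such bound for $\tilde u$ (indeed it is not even clear that $\tilde u\ge 0$ inside $G$, since $u$ may dip below its boundary minimum when the zeroth-order coefficient has no sign), and no admissible constant ($N$, $R$, $L$, $\diam G$, $\max_{\ovr\Om}|u|$, regularity of $\pa G$) controls $\inf\tilde u$ on a compact interior set.

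The paper's route avoids all of this and is exactly what the hypothesis $R<\frac12\,d_0\,\|u\|^{-1}_{C^2(\Om)}$ is for --- a role you assign instead to initializing the moving planes (which only needs the $C^1$ regularity of $\pa G$) and to bounding $\min_{\pa G}u$ from below. Namely, for $y\in\pa G$ and the point $z\in\pa\Om$ on the same normal with $|y-z|=R$, the mean value theorem gives
\begin{equation*}
-\nabla u(y)\cdot\nu(y)\;\ge\; d_0-\|u\|_{C^2(\Om)}R\;\ge\;\frac{d_0}{2},
\end{equation*}
so the normal-derivative bound is transferred from $\pa\Om$ to $\pa G$; a second-order Taylor expansion of $u$ about the nearest point $y\in\pa G$ then yields $u(x)\ge\min_{\pa G}u+\frac14\,d_0\,\dist(x,\pa G)$ for $\dist(x,\pa G)\le\frac12\,d_0\,\|u\|^{-1}_{C^2(\Om)}$, and the remaining (interior) range is handled as in \cite[Proposition 4]{ABR}. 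You should replace your barrier step by this Taylor/mean-value argument; the rest of your proposal then goes through.
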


As for Theorem \ref{th:torsionstability}, the proof of Theorem \ref{th:semilinear} is based on a quantitative study of the method of moving planes. In this case, the function $w^m$ defined by \eqref{wm} satisfies the conditions
\begin{equation*} \label{eq w semilinear}
\De w^{m}+c(x)\,w^{m}=0 \ \mbox{ and } \ w^{m}\ge 0 \ \mbox{ in } \Om_{m},
\end{equation*}
where for $x\in\Om_{m}$
\begin{equation*}
\label{defc}
c(x)=\left\{
\begin{array}{lll}
\ds\frac{f(u(x^{m}))-f(u(x))}{u(x^{m})-u(x)} &\mbox{ if } u(x^{m})\not= u(x),\\
\ds 0 &\mbox{ if } u(x^{m}) = u(x).
\end{array}
\right.
\end{equation*}
Notice that $c(x)$ is bounded by the Lipschitz constant $L$ of $f$ in the interval
$[0,\max\limits_{\ovr{\Om}}u].$

The following Lemma summarizes and generalizes the contents of Lemmas \ref{proposition Harnack} and \ref{proposition Harnack 2} to the present case.

\begin{lm}\label{lem Harnack semil}
Let $D_1,\: D_2$ and $R$ be as in Lemma \ref{proposition Harnack}. Assume that a function $w\in C^0(D_2) \cap C^2(D_2^+)$ satisfies the conditions
\begin{equation*}\label{eq 1 lemma semil}
\begin{cases}
\Delta w + c(x) w = 0, \quad w \geq 0, & \textmd{ in } D_2^+, \\
w=0, & \textmd{ on } \pa D_2^+ \cap \{x_1=0\},
\end{cases}
\end{equation*}
with $|c| \leq L$ in $D_2^+$.

Let $z \in \overline{D_1^+}$. Then, for any $x \in \overline{D_1^+} \setminus \{x_1 = 0 \}$, it holds that
\begin{eqnarray}
&& w(x) \leq C \frac{w(z)}{z_1}, \quad  \textmd{if } z_1>0,  \label{estim z1 >0 semilinear} \\
&& w(x) \leq C \frac{\partial w}{\pa x_1} (z),\quad  \textmd{if } z_1=0,  \label{estim z1 =0 semilinear}
\end{eqnarray}
where $C$ is a constant depending only on $N$, $R$, $L$ and the diameters of $D_1$ and $D_2$.
\end{lm}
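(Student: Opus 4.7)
The plan is to retrace the proofs of Lemmas \ref{proposition Harnack} and \ref{proposition Harnack 2} step by step, substituting the two-sided Harnack inequality \eqref{Harnack Harmonic} for harmonic functions with its analog for non-negative solutions of $\De w+c(x)\,w=0$ carrying a bounded potential $|c|\le L$. The classical interior Harnack inequality for such equations (see for instance \cite[Corollary 8.21]{GT}) provides, for every ball $B_r(x_0)\subset D_2^+$ with $r\le\diam D_2$, a constant $C_H=C_H(N,L,r)$ such that $\sup_{B_{r/2}(x_0)}w\le C_H\inf_{B_{r/2}(x_0)}w$. Using $C_H$ in place of $3\cdot 2^{N-2}$ from \eqref{Harnack Harmonic 2}, and keeping unchanged the upper bound $(2\,\diam D_2)^N/R^N$ on the number of balls needed to connect $z$ with a point $y\in\ovr{E_R}$ via the chaining lemma \cite[Lemma A.1]{ABR}, one obtains \eqref{estim z1 >0 semilinear} in the regime $z_1\ge R/2$, with a constant depending only on $N$, $L$, $R$ and the diameters of $D_1$ and $D_2$.

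The truly delicate step is the subcase $0<z_1<R/2$: in Lemma \ref{proposition Harnack}(ii) the comparison $w(\hat z)\le 2^{N-2}R\,w(z)/z_1$, with $\hat z=(R/2,z_2,\dots,z_N)$, was derived from the \emph{lower} part of the Poisson inequality \eqref{Harnack Harmonic}, which relies on the mean-value representation of harmonic functions and has no direct analog when $c\neq 0$. To replace it I plan to invoke the boundary Harnack principle for uniformly elliptic operators of the form $\De+c(x)$ on a flat portion of the boundary (in the symmetric form already exploited in the harmonic case through \cite[Theorem 11.5]{CS}). Applied to $w$ on the half-ball $B_{R/2}^+(x_0)$ with $x_0=(0,z_2,\dots,z_N)$, it yields a constant $c_0=c_0(N,L,R)>0$ such that
$$
\frac{w(z)}{z_1}\,\ge\, c_0\,\frac{w(\hat z)}{R/2},\qquad 0<z_1<R/2,
$$
which rewrites as $w(\hat z)\le C\,R\,w(z)/z_1$. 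Chaining from $\hat z\in\ovr{E_R}$ via the interior Harnack exactly as in case (i) completes the bound in the regime $0<z_1<R/2$; the case $z_1=0$ then follows by letting $z_1\to 0^+$ and using that $w(z)/z_1\to\pa w/\pa x_1(z)$ on $\{x_1=0\}$, where $w$ vanishes, producing \eqref{estim z1 =0 semilinear} for any $x\in\ovr{E_R}$.

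Finally, to extend both estimates from $x\in\ovr{E_R}$ to arbitrary $x\in\ovr{D_1^+}\setminus\{x_1=0\}$, I plan to again invoke \cite[Theorem 11.5]{CS} on the half-ball $B_{R/2}^+((0,x_2,\dots,x_N))$ to dominate $\sup_{B_{R/2}^+}w$ by a constant multiple of $w(\hat x)$, with $\hat x=(R/2,x_2,\dots,x_N)\in\ovr{E_R}$, and then apply the preceding step to $\hat x$. The only new feature relative to Lemma \ref{proposition Harnack 2} is that the constant produced by the boundary Harnack now depends on $L$ and $R$ in addition to $N$. I expect the main obstacle to be precisely the verification that the boundary Harnack principle genuinely applies in the asserted quantitative form to the operator $\De+c(x)$ with bounded $c$, and that its constant depends only on $N$, $L$ and $R$; once this is settled, the remaining passages are routine adaptations of the linear proofs and yield a constant $C$ with the stated dependencies.
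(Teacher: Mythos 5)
Your proposal follows essentially the same route as the paper's proof: the same three-case decomposition, chaining with the interior Harnack inequality for $\De+c$ from \cite{GT}, a quantitative Hopf-type lower bound near the flat boundary to handle $0\le z_1<R/2$, and a Carleson/boundary-Harnack estimate on half-balls to pass from $\ovr{E_R}$ to all of $\ovr{D_1^+}\setminus\{x_1=0\}$. The step you flag as delicate is settled in the paper by citing the quantitative Hopf lemmas \cite[Proposition 2]{ABR} and \cite[Lemma 2]{ABR} (elementary barrier arguments, rather than the full boundary Harnack principle) for the cases $0<z_1<R/2$ and $z_1=0$, and by using \cite[Theorem 1.3]{BCN} in place of \cite[Theorem 11.5]{CS} for the boundary estimate adapted to the operator $\De+c$ with bounded $c$.
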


\begin{proof}
Let $E_R$ be defined by \eqref{D e Er}. First, we prove \eqref{estim z1 >0 semilinear} and \eqref{estim z1 =0 semilinear} for $x\in E_R$ (as done in Lemma \ref{proposition Harnack}) and then we extend such estimates to any $x \in \overline{D_1^+} \setminus \{x_1 = 0 \}$ (as in Lemma \ref{proposition Harnack 2}). We follow step by step the proofs of Lemmas \ref{proposition Harnack} and \ref{proposition Harnack 2}.

The main ingredient in the proof of Lemma \ref{proposition Harnack} was Harnack's inequality. In the present case, \cite[Theorem 8.20]{GT} gives
\begin{equation} \label{harnack semil}
\sup_{B_{r/4}} w \leq C \inf_{B_{r/4}} w ,
\end{equation}
for any $B_r \subset D_1^+$; here, the constant $C$ can be bounded by $C_N^{\sqrt{N} + \sqrt{L R}}$, where $C_N$ is a constant depending only on $N$. Inequality \eqref{harnack semil} replaces \eqref{Harnack Harmonic 2} in this setting.

If we assume that case (i) of the proof of Lemma \ref{proposition Harnack} holds, we readily obtain
\begin{equation*}
\sup_{E_R} w \leq (\diam D_1) C^{({2\diam D_2} /R) ^N} \frac{w(z)}{z_1}.
\end{equation*}
If case (ii) or case (iii) hold, the conclusion follows by applying \cite[Proposition 2]{ABR} and \cite[Lemma 2]{ABR}, respectively.

To prove the analogous of Lemma \ref{proposition Harnack 2}, we need the equivalent of \eqref{bound Harnack I} for the semilinear case. This can be found, for instance, in \cite[Theorem 1.3]{BCN}, which gives
\begin{equation*}
\sup_{B_{R/2}^+} w \leq M u(\hat{x});
\end{equation*}
here, the constant $M$ depends only on $R$, $N$ and $L$. The rest of the proof is completely analogous to the one of Lemma \ref{proposition Harnack 2}.
\end{proof}

Theorem \ref{th:abr} and its proof apply also to the semilinear case. It is clear that the constant $MC$ appearing in \eqref{smallness} will now depend on $N$, $R$, $\diam G$, the $C^2$ regularity of $\pa G$, and $L$.

Lemma \ref{lemma bound on dist} is the last ingredient that needs some remodeling.

\begin{lm}
\label{lemma bound on dist semilin}
Let $u \in C^2(\overline{\Om})$ be a solution of \eqref{pb semilinear} with $-\pa u / \pa \nu \geq d_0 > 0$ on $\pa \Om$, where $\nu$ is the normal exterior to $\pa \Om$.
\par
If $R < \frac12\,d_0\,\| u\|^{-1}_{C^2(\Om)}$, then there exists a positive constant $K$ depending on $\diam G$, $\max_{\ovr\Om}|u|$, the $C^{2,\alpha}$ regularity of $\partial G$ and the Lipschitz constant  $L$ such that
\begin{equation*}\label{Kbound semilin}
K\,\dist(x,\pa G) + \min_{\pa G} u \leq u(x),
\end{equation*}
for every $x \in \ovr{G}$.
\end{lm}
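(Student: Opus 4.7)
The plan is to mimic the two-step strategy of Lemma \ref{lemma bound on dist}, with the torsion comparison replaced by a linear auxiliary problem adapted to the semilinear setting. The first step is to establish $m_0 := \min_{\pa G} u \geq \tfrac{3}{4}\,d_0\,R > 0$. Since $\Om = G + B_R$, any $x \in \pa G$ satisfies $x = y - R\,\nu(y)$ for $y := x + R\,\nu_G(x) \in \pa\Om$, where $\nu_G$ and $\nu$ denote the outer normals to $\pa G$ and $\pa\Om$ respectively. Taylor-expanding $u$ along the inward normal at $y$, using $u(y) = 0$, $-u_\nu(y) \geq d_0$, and $|u_{\nu\nu}| \leq \|u\|_{C^2(\Om)}$, gives
\[
u(x) \;\geq\; R\,d_0 \;-\; \tfrac{R^2}{2}\,\|u\|_{C^2(\Om)},
\]
and the hypothesis $R < \tfrac{1}{2}\,d_0/\|u\|_{C^2(\Om)}$ bounds the quadratic correction by $\tfrac{1}{4}\,d_0\,R$, yielding the claimed lower bound on $m_0$.

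For the second step, I would follow the torsion comparison of Lemma \ref{lemma bound on dist} as closely as possible. Let $L$ be the Lipschitz constant of $f$ on $[0, \|u\|_\infty]$ and define $v$ on $G$ as the solution of
\[
-\De v + L\,v \;=\; f(m_0) + L\,m_0 \quad \text{in } G, \qquad v = m_0 \quad \text{on } \pa G.
\]
The operator $-\De + L$ satisfies the weak maximum principle since $L \geq 0$. Rewriting the equation for $u$ as $-\De u + L\,u = f(u) + L\,u$ and using $|f(u) - f(m_0)| \leq L\,|u - m_0|$, one obtains $f(u) + L\,u \geq f(m_0) + L\,m_0$ whenever $u \geq m_0$; so, assuming the pointwise bound $u \geq m_0$ in $G$, the comparison $u \geq v$ in $G$ follows at once. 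Finally, $\tilde v := v - m_0$ satisfies a linear Dirichlet problem on $G$ with zero boundary data, and an analog of \eqref{boundsdist} for the operator $-\De + L$ on a $C^{2,\alpha}$-smooth domain yields $\tilde v \geq K\,\dist(x, \pa G)$ in $\ovr G$, with $K > 0$ depending on the listed quantities. Combining gives $u \geq m_0 + K\,\dist(x, \pa G)$.

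The main obstacle is the pointwise lower bound $u \geq m_0$ throughout $G$, which is automatic in the torsion case but subtle here: non-monotonicity of $f$ allows $u$ to take interior values below $m_0$, with $f(u_{\min}) \leq 0$ at any such minimum, and the sign of $f(m_0)$ also affects the auxiliary problem for $v$. The natural route is to strengthen Step 1 by differentiating the same Taylor expansion once more to obtain $-\pa u/\pa\nu_G \geq d_0/2$ on $\pa G$, which translates into the local Hopf-type growth $u(x) \geq m_0 + (d_0/4)\,\dist(x, \pa G)$ in the tubular neighborhood $\{\dist(x, \pa G) \leq R\}$ of $\pa G$. Extending this estimate into the bulk of $G$ --- either by iteration on nested parallel surfaces or by a compactness argument exploiting $\ovr G \subset \Om$ together with the strict positivity $u > 0$ in $\Om$ --- is where the principal technical work lies, and should be closely paralleled to the interior bounds \eqref{boundsdist} invoked from \cite{ABR} in the linear case.
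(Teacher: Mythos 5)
Your first (and main) route does not work, and you have in fact put your finger on why: the comparison $u\ge v$ for the auxiliary problem $-\De v+Lv=f(m_0)+Lm_0$ requires the ordering $f(u)+Lu\ge f(m_0)+Lm_0$, which requires $u\ge m_0$ in $G$ --- precisely the inequality you are trying to prove. There is a second, independent failure you do not flag: $\tilde v=v-m_0$ satisfies $-\De\tilde v+L\tilde v=f(m_0)$, and $f(m_0)$ has no sign (only $f(0)\ge0$ is assumed), so no analogue of \eqref{boundsdist} can give $\tilde v\ge K\,\dist(\cdot,\pa G)$; $\tilde v$ may well be negative. This route should simply be discarded, and with it your Step 1: the bound $\min_{\pa G}u\ge\tfrac34 d_0R$ is true but is not the quantity the lemma needs.

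What the paper actually does is the argument you relegate to a fallback sketch. Since $\Om=G+B_R$, each $x\in G$ with $\dist(x,\pa G)<\rho$ has a nearest point $y\in\pa G$ and a point $z\in\pa\Om$ on the same normal with $|y-z|=R$ and $\nu(y)=\nu(z)$; the mean value theorem transfers $-\na u(z)\cdot\nu(z)\ge d_0$ to $-\na u(y)\cdot\nu(y)\ge d_0-R\|u\|_{C^2(\Om)}\ge d_0/2$, and a second-order Taylor expansion at $y$ then gives $u(x)\ge u(y)+\tfrac14 d_0|x-y|\ge\min_{\pa G}u+\tfrac14 d_0\,\dist(x,\pa G)$ for $\dist(x,\pa G)\le\tfrac12 d_0\|u\|^{-1}_{C^2(\Om)}$. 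You reach essentially this collar estimate, but then stop: you describe the extension from the collar to all of $\ovr G$ as ``where the principal technical work lies'' without carrying it out. That extension is the one remaining step, and the paper supplies it by observing that the collar estimate replaces formula (28) in the proof of \cite[Proposition 4]{ABR}, after which the interior part of that proof (Harnack chains for $\De w+c(x)w=0$ with $|c|\le L$, propagating the lower bound from the collar into the bulk) is repeated verbatim. As written, your proposal therefore has a genuine gap exactly at the interior extension, and its only fully executed argument is the one that is circular.
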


\begin{proof}
The proof follows the steps of that of \cite[Proposition 4]{ABR}. Let $x \in G$ and assume that $\dist(x,\pa G) < \rho$, where $\rho$ is the optimal radius of the interior touching ball to $\pa G$. For such $x$, there exist $y \in \pa G$ and $z \in \pa \Om$ such that
\begin{equation*}
y-x= \dist(x,\pa G)\,\nu(y),\quad z-x= (\dist(x,\pa G)+R)\,\nu(z).
\end{equation*}
We notice that $\nu(y)=\nu(z)$, $|y-z|=R$ and $- \nabla u(z) \cdot \nu(z) \geq d_0$. The mean value theorem implies that
\begin{equation} \label{taylor nabla u}
-\nabla u(y) \cdot \nu(y) \geq d_0 -  \|u\|_{C^2(\Om)} R \geq \frac{d_0}{2},
\end{equation}
for every $R \leq \frac 12 d_0/\|u\|_{C^2(\Om)}$. Again, by Taylor expansion we have
\begin{equation*}
u(x) \geq u(y) + \nabla u(y) \cdot (x-y) - \frac{1}{2} \|u\|_{C^2(\Om)} |x-y|^2,
\end{equation*}
and from \eqref{taylor nabla u} we obtain that
\begin{equation*}
u(x) \geq u(y) +\frac{1}{4}\,d_0\,|x-y|  , \quad \textmd{for every } |x-y| \leq \frac12\, d_0\,\|u\|^{-1}_{C^2(\Om)},
\end{equation*}
which implies that
\begin{equation} \label{taylor u}
u(x) \geq \min_{\pa G} u + \frac{1}{4}\,d_0\, \dist(x,\pa G),
\end{equation}
for every $x$ such that $\dist(x,\pa G) \leq \frac12\, d_0\,\|u\|^{-1}_{C^2(\Om)}$.

Now, \eqref{taylor u} replaces formula (28) in the proof of \cite[Proposition 4]{ABR} and each argument of that proof can be repeated in our case leading to the conclusion.
\end{proof}

\begin{proofD}
The proof follows the outline of that of Theorem \ref{th:torsionstability}.
\par
For any fixed direction $\om$, the moving plane method provides a maximal cap $G_{m}$ where, thanks to Lemma \ref{lem Harnack semil}, we have the bound
\begin{equation*}
w^{m}\le C\, \usn \ \mbox{ in } \ G_m,
\end{equation*}
which is the analog of \eqref{smallness}. Here, $w^{m}$ is defined by \eqref{wm} and $C$ is a constant depending on $N$, $R$, $\diam G$, the $C^2$ regularity of $\pa G$ and on $L$.

Then, we define a symmetric set $X$ as done in Section \ref{section stability estimates}. We use Lemma \ref{lemma bound on dist semilin} and prove that
\begin{equation*}
\dist(x,\pa G) \leq C_* \, \usn,
\end{equation*}
for any $x \in \pa X$, where $C$ is as in \eqref{C4}. This says that, if $ \usn $ is small, then $X$ is almost equal to $G$; in particular we have that
\begin{equation*}
G_\shortparallel^{s} \subset X \subset G,
\end{equation*}
for every $s$ in the interval $(C_*\, \usn \, ,\rho/2)$ and whenever
\begin{equation*}
\usn < \frac{\rho}{4C_*}.
\end{equation*}
We notice that such estimates do not depend on the direction $\om$. By choosing $\om$ to be each of the coordinate directions $e_1,\ldots,e_N$, we define the approximate center of symmetry and the conclusion follows by repeating the argument of the proof of Theorem \ref{th:torsionstability}.
\end{proofD}

\end{document}